\documentclass[a4paper]{article}

\usepackage{amsthm}
\usepackage{graphicx}
\usepackage{comment}
\usepackage{color}
\usepackage{enumitem}
\usepackage[top=2cm, bottom=2cm, left=2cm, right=2cm]{geometry}

\usepackage{hyperref} 
\usepackage{xcolor}
\hypersetup{
    colorlinks=true,
    citecolor=blue,
    linkcolor=blue,
    urlcolor=orange,
}

\theoremstyle{definition}
\newtheorem{dfn}{Definition}[section]
\newtheorem{thm}[dfn]{Theorem}
\newtheorem{conj}[dfn]{Conjecture}
\newtheorem{lem}[dfn]{Lemma}
\newtheorem{cor}[dfn]{Corollary}
\newtheorem*{rem}{Remark}

\let\oldenumerate\enumerate
\renewcommand{\enumerate}{
   \oldenumerate
   \setlength{\itemsep}{0cm}
   \setlength{\parskip}{0cm}
}
\let\olditemize\itemize
\renewcommand{\itemize}{
   \olditemize
   \setlength{\itemsep}{0cm}
   \setlength{\parskip}{0cm}
}
\let\olddescription\description
\renewcommand{\description}{
   \olddescription
   \setlength{\itemsep}{0cm}
   \setlength{\parskip}{0cm}
}

\title{2-distance 20-coloring of planar graphs with maximum degree 6}

\author{Kengo Aoki\thanks{Department of Informatics, Graduate School of Integrated Science and Technology, Shizuoka University, 3-5-1 Johoku, Chuo-ku, Hamamatsu-shi, 432-8011, Shizuoka, Japan, aoki.kengo.19@shizuoka.ac.jp}}
\date{}
\begin{document}
\maketitle
\begin{abstract}
A 2-distance $k$-coloring of a graph $G$ is a proper $k$-coloring such that any two vertices at distance two or less get different colors.
The 2-distance chromatic number of $G$ is the minimum $k$ such that $G$ has a 2-distance $k$-coloring, denoted by $\chi_2(G)$.
In this paper, we show that $\chi_2(G) \leq 20$ for every planar graph $G$ with maximum degree at most six, which improves a former bound $\chi_2(G) \leq 21$.
\end{abstract}

\section{Introduction}
All graphs considered in this paper are simple, finite, and planar.
For a graph $G$, we denote the set of vertices, the set of edges, and the set of faces by $V(G)$, $E(G)$, and $F(G)$, respectively.
For a graph $G$ and a vertex $v$ in $G$, let $G - v$ denote the graph obtained from $G$ by deleting the vertex $v$ and all edges incident with $v$.
The set of neighbours of a vertex $v$ in a graph $G$ is denoted by $N_G(v)$.
The \textit{degree} of a vertex $v$ in a graph $G$, denoted by $d_G(v)$, is the number of edges of $G$ incident with $v$.
The maximum degree and minimum degree of a graph $G$ are denoted by $\Delta(G)$ and $\delta(G)$ ($\Delta$ and $\delta$ for short).
A vertex of degree $k$ (respectively, at least $k$, at most $k$) is said to be a $k$-\textit{vertex} (respectively, $k^+$-vertex, $k^-$-vertex).
For $X \subseteq V(G)$, let $G[X]$ denote the subgraph of $G$ induced by $X$.
A face is said to be \textit{incident} with the vertices and edges in its boundary, and two faces are \textit{adjacent} if their boundaries have an edge in common.
The \textit{degree} of a face $f$ in a graph $G$, denoted by $d(f)$, is the number of edges in its boundary.
A face of degree $k$ (respectively, at least $k$, at most $k$) is said to be a $k$-\textit{face} (respectively, $k^+$-face, $k^-$-face).
A $[v_1v_2\ldots v_k]$ is a $k$-face with vertices $v_1,v_2, \ldots ,v_k$ on its boundary.
For a vertex $v$ in a graph $G$, let $m_k(v)$ denote the number of $k$-faces incident with $v$,  and let $n_k(v)$ denote the number of $k$-vertices adjacent to $v$.

Let $\phi$ be a partial coloring of a graph $G$.
For a vertex $v$ in a graph $G$, let $C_\phi(v)$ denote the set of colors assigned to the vertices within distance two from $v$.
A 2-distance $k$-coloring of a graph $G$ is a mapping $\phi : V(G) \rightarrow \{1,2,\ldots,k\}$ such that $\phi(v_1) \neq \phi(v_2)$ for any two vertices $v_1,v_2 \in V(G)$ with $d_G(v_1, v_2) \leq 2$, where $d_G(v_1, v_2)$ is the distance between the two vertices $v_1$ and $v_2$.
The 2-distance chromatic number of $G$ is the minimum $k$ such that $G$ has a 2-distance $k$-coloring, denoted by $\chi_2(G)$.
Let $d_2(v)$ denote the number of vertices within distance two from a vertex $v$.
Any definitions and notations not explicitly stated in this paper conform to those in \cite{MR2368647}.

The study of 2-distance coloring originated from the research on square coloring, which was first introduced by Kramer and Kramer \cite{kramer1969probleme, kramer1969farbungsproblem}.
The square of a graph $G$, denoted by $G^2$, is obtained by adding edges between all pairs of vertices that have a common neighbour in $G$.
In 1977, Wegner made the following conjecture.
\begin{conj}\cite{wegner1977graphs}\label{wegner}
If $G$ is a planar graph, then $\chi_2(G) \leq 7$ if $\Delta = 3$, $\chi_2(G) \leq \Delta+5$ if $4\leq \Delta \leq 7$, and $\chi_2(G) \leq \lfloor \frac{3\Delta}{2} \rfloor + 1$ if $\Delta \geq 8$.
\end{conj}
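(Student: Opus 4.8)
The plan is to attack the conjecture by the discharging method, the standard framework for coloring problems on planar graphs, treating the three degree regimes separately since the target $k$ differs in each. Suppose the statement fails and let $G$ be a counterexample minimizing $|V(G)|+|E(G)|$, where the target is $k=7$ when $\Delta=3$, $k=\Delta+5$ when $4\le\Delta\le7$, and $k=\lfloor 3\Delta/2\rfloor+1$ when $\Delta\ge8$. By minimality every proper subgraph of $G$ admits a 2-distance $k$-coloring, so $G$ contains no \emph{reducible configuration}: a local structure that can be deleted, colored by induction, and then extended to the deleted part because each uncolored vertex $v$ satisfies $|C_\phi(v)|<k$. The proof then rests on two ingredients: (i) a catalogue of reducible configurations, each verified by a local counting argument bounding $|C_\phi(v)|$ against $k$; and (ii) a global discharging argument showing that an embedded planar graph avoiding all of them cannot exist.

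For ingredient (ii) I would assign each vertex $v$ the initial charge $d_G(v)-4$ and each face $f$ the charge $d(f)-4$, so that by Euler's formula $\sum_{v}(d_G(v)-4)+\sum_{f}(d(f)-4)=-8$. One then designs discharging rules that move charge from large faces and high-degree vertices toward low-degree vertices, and argues that, in the absence of any reducible configuration, every vertex and face ends with nonnegative charge — contradicting the total $-8$. The quantities $m_k(v)$ and $n_k(v)$ defined above are exactly the bookkeeping needed to state and apply these rules. The regime split is essential: for $\Delta\ge8$ the bound $\lfloor 3\Delta/2\rfloor+1$ is attained by Wegner's own extremal examples, so the reducible configurations must be tuned to the exact threshold; for $3\le\Delta\le7$ the additive slack (the $+5$, or the jump to $7$ at $\Delta=3$) permits coarser configurations, but the small degrees leave correspondingly little room in the discharge.

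The hard part — and the reason the conjecture is still open in general — is producing \emph{enough} reducible configurations at the precise threshold $\frac{3}{2}\Delta$. A naive estimate gives only $d_2(v)=O(\Delta^2)$ vertices within distance two, and even careful counting tends to yield a leading coefficient near $\frac{5}{3}$, as in the Molloy–Salavatipour upper bound $\chi_2(G)\le\lceil 5\Delta/3\rceil+O(1)$, rather than $\frac{3}{2}$. Closing the gap from $\frac{5}{3}$ to $\frac{3}{2}$ cannot come from degree counting alone; it must exploit the sparsity that planarity forces on the second neighbourhood — for instance, that the $\Delta$-vertices at distance two from a common vertex $v$ cannot all cluster around short faces without creating a reducible configuration. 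I expect this step, converting planar scarcity into the sharp factor $\frac{3}{2}$, to be the genuine obstacle.

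I would be candid that a complete resolution along these lines is not currently known, so the realistic deliverable is not the full conjecture at once but the settling of a single regime: for a fixed small $\Delta$ the additive targets $7$ and $\Delta+5$ are far more tractable, since the finite value of $k$ lets the reducible-configuration analysis be made exhaustive and the discharging rules kept to a manageable list. The present paper's result fits precisely into this program, establishing a sharp additive-type bound in the regime $\Delta\le 6$, where the discharging can be pushed to completion even though the general factor-$\frac{3}{2}$ barrier remains.
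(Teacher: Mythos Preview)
The statement you are addressing is Wegner's Conjecture, which the paper records as an open conjecture and does not attempt to prove; there is no proof in the paper to compare against. Your proposal is, correspondingly, not a proof either: you sketch the discharging framework, identify the well-known obstruction (pushing the leading coefficient from $5/3$ down to $3/2$), and then explicitly concede that ``a complete resolution along these lines is not currently known.'' That is an honest and accurate summary of the state of the art, but it is a discussion of why the problem is hard rather than a proof attempt with a verifiable argument. The discharging template you describe is precisely what the paper deploys for its actual result, Theorem~\ref{main}, which establishes $\chi_2(G)\le 20$ for $\Delta\le 6$---still nine colors above the conjectured bound of $\Delta+5=11$ in that regime.
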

The case of $\Delta = 3$ was independently proven by Thomassen \cite{THOMASSEN2018192} and by Hartke et al. \cite{hartke2016chromatic}.
Havet et al. \cite{HAVET2007515, havet2017list} proved that the conjecture holds asymptotically.
Bousquet et al. \cite{MR4695743} proved that $\chi_2(G) \leq 12$ for $\Delta \leq 4$ using an automatic discharging method.
Deniz \cite{deniz20232distance} proved that $\chi_2(G) \leq 16$ for $\Delta \leq 5$.
For a comprehensive overview of 2-distance coloring and related research, we refer the reader to \cite{MR4578038}.

The upper bound on $\chi_2(G)$ for $\Delta = 6$ has been gradually improving.
Zhu and Bu \cite{zhu2018minimum} proved that $\chi_2(G) \leq 5\Delta - 7$ for $\Delta \geq 6$, which was improved by Krzyzi\'{n}ski et al. \cite{MR4740061} to $\chi_2(G) \leq 3\Delta + 4$ for $\Delta \geq 6$.
In this paper, we show that $\chi_2(G) \leq 20$ for every planar graph $G$ with $\Delta \leq 6$, which improves the result of 
$\chi_2(G) \leq 21$ proved by Bousquet et al. \cite{bousquet2023improved}.
We prove the following theorem.
\begin{thm}\label{main}
If $G$ is a planar graph with maximum degree $\Delta \leq 6$, then $\chi_2(G) \leq 20$.
\end{thm}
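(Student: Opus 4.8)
The plan is the standard \emph{minimal counterexample} method combined with \emph{discharging}. Assume the theorem is false and let $G$ be a counterexample with $|V(G)|+|E(G)|$ as small as possible; thus $G$ is planar, $\Delta(G)\le 6$, $G$ has no 2-distance $20$-coloring, while every planar graph with maximum degree at most $6$ and fewer vertices plus edges does have one. Fix a plane embedding. A first, trivial reduction shows $\delta(G)\ge 4$: if $d_G(v)\le 3$, then at most $d_G(v)+d_G(v)(\Delta-1)\le 18$ vertices lie within distance two of $v$ other than $v$ itself, so a 2-distance $20$-coloring of $G-v$ (which exists by minimality) leaves at least two colors free at $v$, a contradiction. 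One also reduces to the case that $G$ is connected and that every face of $G$ is bounded by a cycle.

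The core of the argument is a list of \emph{reducible configurations} — local subgraphs that $G$ cannot contain. The vertices that cause trouble are the \emph{light} ones, i.e. those $v$ for which the number of colors forbidden at $v$ can reach $19$ or $20$; since $\Delta=6$ a light vertex can have up to $36$ vertices in its second neighborhood, so the reducible configurations must pin down exactly how crowded the neighborhood of a $4$- or $5$-vertex, or of a small face, is allowed to be. Typical entries are: a $4$-vertex whose incident vertices and faces are not ``generic'' (too many $4^-$- or $5^-$-neighbors, or neighbors that share a face or a common neighbor, so that after deleting $v$ strictly fewer than $20$ colors are forbidden at $v$); various adjacency and distance-two patterns among $4$- and $5$-vertices; and configurations sitting on $3$- and $4$-faces that carry several low-degree vertices. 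Each such configuration is shown reducible by the same scheme: perform a small surgery on $G$ (delete a vertex, delete an edge, or identify two vertices lying at distance exactly three), invoke minimality to 2-distance $20$-color the smaller graph, and then recolor the affected vertices — directly when the number of forbidden colors is below $20$, and otherwise by freeing a color through a Kempe-type exchange on a component two-colored by a suitable pair of colors, or by recoloring a neighbor whose own forbidden set has slack.

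For the discharging phase, assign to each $x\in V(G)\cup F(G)$ the charge $\mu(x)=d(x)-4$ (with $d(f)$ the face degree for $x=f$). Since $\sum_{v}d_G(v)=\sum_{f}d(f)=2|E(G)|$, Euler's formula gives $\sum_{x\in V(G)\cup F(G)}\mu(x)=4|E(G)|-4(|V(G)|+|F(G)|)=-8<0$. As $\delta(G)\ge 4$, the only elements with negative charge are the $3$-faces (charge $-1$); $4$-faces and $4$-vertices are neutral, and everything else is positive. I would then set up discharging rules — stated, in the notation of this paper, in terms of the quantities $m_k(v)$ and $n_k(v)$ — that send charge from $5^+$-vertices and $5^+$-faces toward incident $3$-faces, together with whatever secondary transfers are needed to keep $4$-vertices and other tight elements nonnegative, and prove, using the reducible list to exclude every bad local picture, that after discharging every vertex and every face has nonnegative final charge. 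The inequality $0\le\sum_{x}\mu(x)=-8$ is then the desired contradiction.

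I expect the reducibility analysis, not the discharging bookkeeping, to be the main obstacle. Pushing the bound from $21$ down to $20$ forces one to prove reducibility of configurations that sit right at the boundary of what is colorable: the naive ``delete and greedily extend'' step typically fails by exactly one color, so almost every configuration needs a bespoke recoloring or color-exchange argument, and the number of degree-and-face cases to check around $4$- and $5$-vertices is large. Once the reducible set is finalized, the discharging rules have to be retuned to match it exactly, which is the usual iterative endgame of such proofs.
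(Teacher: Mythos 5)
There is a genuine gap, and it appears already in your very first reduction. You claim $\delta(G)\ge 4$ by deleting a $3^-$-vertex $v$, coloring $G-v$ by minimality, and extending to $v$ since at most $18$ colors are forbidden. This does not work: a $2$-distance $20$-coloring of $G-v$ need not restrict to a valid partial $2$-distance coloring of $G$, because two neighbours of $v$ are at distance $2$ in $G$ \emph{through} $v$ but may be at distance greater than $2$ in $G-v$ and hence receive the same color. One must repair this by adding edges among $N_G(v)$ so that the smaller graph is ``proper'' (preserves all distance-$\le 2$ relations), and for a $3$-vertex whose three neighbours are all $6$-vertices this is impossible without raising some degree above $6$, which destroys the minimality hypothesis (and recoloring a $6$-neighbour is hopeless since it can see up to $36$ vertices). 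This is why the paper only proves $\delta(G)\ge 3$ and is forced to carry $3$-vertices, with initial charge $-1$, all the way through the discharging: it needs structural lemmas restricting their neighbourhoods and faces (Lemma~\ref{lem2}, Lemma~\ref{lem14}, Corollary~\ref{cor15}) and dedicated rules (R2, R3) feeding them charge. Your charge accounting, which asserts that the only negative elements are $3$-faces, is therefore based on a false premise, and the subsequent rule design would have to be substantially different from what you describe.

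The second problem is that, beyond this faulty first step, the proposal defers essentially all of the mathematical content. You correctly identify that the proof must consist of a list of reducible configurations plus matching discharging rules, and you correctly predict that the reducibility analysis is the hard part, but you do not exhibit a single concrete configuration, reduction, or rule, nor verify any final charge. In this problem the entire difficulty lies in those specifics: one must prove precise statements such as ``a $5$-vertex incident to five $3$-faces has only $6$-neighbours, each incident to at most four $3$-faces,'' and calibrate fractional transfers (the paper uses $\tfrac19$, $\tfrac2{15}$, $\tfrac1{15}$, etc.) so that every case closes. A plan that says ``perform a small surgery, recolor, possibly via a Kempe exchange, then tune the rules'' is a description of the method, not a proof of the theorem; as written it cannot be checked, and its one checkable claim is wrong.
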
 

\begin{rem}
Recently, Deniz \cite{deniz20242distance} posted the first version of the proof on arXiv on March 18, 2024, showing that $\chi_2(G) \leq 2\Delta + 7$ for planar graphs.
The proof in this version covers the cases $\Delta = 6$, $7$, and $8$.
According to this inequality, when $\Delta = 6$, it follows that $\chi_2(G) \leq 19$.
However, the proof for the case $\Delta = 6$ in this initial version is incomplete and requires further elaboration.
\end{rem}

\section{Reducible configurations}
Let $G$ be a minimum counterexample to Theorem~\ref{main} with minimum $|V(G)| + |E(G)|$.
That is $G$ is a planar graph with $\chi_2(G) > 20$, such that for any planar subgraph $G'$ with $\Delta(G') \leq \Delta(G)$ and $|V(G')| + |E(G')| < |V(G)| + |E(G)|$, we have  $\chi_2(G') \leq 20$.
Obviously, $G$ is a connected graph.

Let $C = \{1,2, \ldots , 20\}$ be a set of colors.
We call a graph $G'$ \textit{proper} with respect to $G$ if $G'$ is obtained from $G$ by deleting some edges or vertices and adding some edges such that for any two vertices $v_1$, $v_2 \in V(G) \cap V(G')$ with $d_G(v_1,v_2) \leq 2$, we have $d_{G'}(v_1,v_2) \leq 2$.
This definition of proper is the same as the one used in \cite{deniz20232distance, hou2023coloring}.
In this section, we present some reducible configurations of $G$.
The proofs of the lemmas generally follow a similar pattern:
We construct a graph $G'$ that is proper with respect to $G$ by deleting a vertex $v$ from $G$ and adding some edges.
By the minimality of $G$, there exists a 2-distance 20-coloring $\phi'$ of $G'$.
Let $\phi$ be a coloring of $G$ such that every vertex in $V(G)$, except for the deleted vertex $v$, is colored using $\phi'$.
If $|C| - |C_{\phi}(v)| \geq 1$, then a safe color exists for $v$.
By coloring $v$ with the safe color, $\phi'$ can be extended to a 2-distance 20-coloring $\phi$ of $G$.
This implies that $\chi_2(G) \leq 20$, which is a contradiction.
The essence of the proof is to construct a proper $G'$ such that $|C_{\phi}(v)| \leq d_2(v) \leq 19$.

\begin{lem}\label{lem1}
We have $\delta(G) \geq 3$. 
\end{lem}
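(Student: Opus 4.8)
The plan is to run the deletion-and-recolor scheme described just before the lemma. Assume for contradiction that $\delta(G) \le 2$ and pick a vertex $v$ with $d_G(v) \le 2$; since $G$ is connected on at least two vertices, in fact $d_G(v) \in \{1,2\}$. I would build a graph $G'$ that is proper with respect to $G$ as follows: if $d_G(v)=1$, take $G' = G - v$; if $d_G(v) = 2$ with $N_G(v) = \{v_1,v_2\}$, take $G' = (G-v) + v_1v_2$ when $v_1v_2 \notin E(G)$, and $G' = G-v$ otherwise. Here $|V(G')| + |E(G')| < |V(G)| + |E(G)|$ because we delete a vertex together with all (at least one) incident edges while adding at most one edge, and $\Delta(G') \le 6$ because the only vertices whose degree could increase are $v_1,v_2$, each of which loses its edge to $v$ and gains at most the edge $v_1v_2$, so $d_{G'}(v_i) \le d_G(v_i) \le 6$.

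Next I would check that $G'$ is proper, which is the only step needing a little attention. Take $x,y \in V(G)\cap V(G') = V(G)\setminus\{v\}$ with $d_G(x,y) \le 2$. Either there is a path of length at most two between them in $G$ avoiding $v$, in which case it survives in $G - v \subseteq G'$; or every such path uses $v$ as an internal vertex, which forces the path to be $x\,v\,y$ with $x,y \in N_G(v)$, hence $\{x,y\} = \{v_1,v_2\}$ and $d_G(v_1,v_2) = 2$. In the latter case the edge $v_1v_2$ is present in $G'$ (either it was already in $G$, or we added it), so $d_{G'}(v_1,v_2) = 1 \le 2$. Thus all distance-at-most-two pairs of $G$ are preserved in $G'$, so $G'$ is proper.

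By the minimality of $G$, $G'$ has a 2-distance $20$-coloring $\phi'$; let $\phi$ be the coloring of $G$ that agrees with $\phi'$ on every vertex other than $v$ and leaves $v$ uncolored. Since $G'$ is proper, $\phi$ is a valid 2-distance coloring of $G - v$. It remains to bound $|C_\phi(v)|$. The vertices within distance two of $v$ in $G$, excluding $v$ itself, number at most $|N_G(v)| + \sum_{u\in N_G(v)} |N_G(u)\setminus\{v\}| \le 2 + 2\cdot 5 = 12$, using $d_G(u) \le \Delta \le 6$ for each neighbour $u$. Hence $|C_\phi(v)| \le 12 \le 19 < 20$, so some color in $C \setminus C_\phi(v)$ is available; coloring $v$ with it extends $\phi$ to a 2-distance $20$-coloring of $G$, contradicting $\chi_2(G) > 20$. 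I do not expect any genuine obstacle here; the only point to get right is the verification that adding the single edge $v_1v_2$ suffices to keep $G'$ proper while respecting $\Delta \le 6$.
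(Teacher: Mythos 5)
Your proposal is correct and follows essentially the same argument as the paper: delete the low-degree vertex, add the edge $v_1v_2$ in the 2-vertex case to keep $G'$ proper, and extend the coloring using $|C_\phi(v)| \le 12 < 20$. The extra care you take with the case $v_1v_2 \in E(G)$ and with verifying $\Delta(G') \le 6$ is a minor refinement of the paper's version, not a different route.
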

\begin{proof}
Assume that $G$ contains a 1-vertex $v$.
It is clear that $G' = G - v$ is proper with respect to $G$.
By the minimality of $G$, $G'$ has a 2-distance 20-coloring $\phi'$.
Let $\phi$ be a coloring of $G$ such that every vertex in $V(G)$, except for $v$, is colored using $\phi'$.
Since $\Delta \leq 6$, it follows that $|C_\phi(v)| \leq 6$ and $|C| - |C_\phi(v)| \geq 14$.
Therefore, there exists a safe color for $v$.
By coloring $v$ with the safe color, $\phi$ becomes a 2-distance 20-coloring of $G$, a contradiction.
Next, we assume that $G$ has a 2-vertex $v$ with $N_G(v) = \{x, y\}$.
Let $G' = G - v + \{xy\}$.
The graph $G'$ is proper with respect to $G$.
By the minimality of $G$, $G'$ has a 2-distance 20-coloring $\phi'$.
Let $\phi$ be a coloring of $G$ such that every vertex in $V(G)$, except for $v$, is colored using $\phi'$.
Since $\Delta \leq 6$, it follows that $|C_\phi(v)| \leq 12$ and $|C| - |C_\phi(v)| \geq 8$.
Therefore, we can color $v$ with a safe color, a contradiction.
\end{proof}

\begin{lem}\label{lem2}
Let $v$ be a 3-vertex.
Then, 
	\begin{enumerate}
	\item[(1)] $n_{5^-}(v) = 0$,
	\item[(2)] $m_3(v) = 0$, and
	\item[(3)] $m_4(v) \leq 1$.
	\end{enumerate}
\end{lem}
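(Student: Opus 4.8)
The plan is to prove all three parts with the reducible-configuration scheme described above: assume the configuration occurs at a 3-vertex $v$ with $N_G(v) = \{x,y,z\}$, build a graph $G'$ from $G$ by deleting $v$ and adding a small set of edges among $x, y, z$, check that $G'$ is proper with respect to $G$ and still satisfies $\Delta(G') \le 6$, take a 2-distance 20-coloring $\phi'$ of $G'$ by the minimality of $G$, and extend $\phi'$ to $v$ by showing that the vertices within distance two of $v$ use at most $19$ colors.

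The observation that drives all three parts is that, to keep $G'$ proper, it is enough to make $x, y, z$ pairwise at distance at most two in $G'$: these are the only pairs whose $G$-distance was witnessed exclusively through $v$, while every other pair at $G$-distance at most two retains a witnessing walk avoiding $v$. The cheapest way to arrange this is to add the (at most two) edges joining one of $x, y, z$ to the other two, omitting any that is already an edge of $G$. For part (1), suppose some neighbour, say $x$, has $d_G(x) \le 5$; then $x$ has degree at most $4$ in $G - v$, so we may add $xy$ and $xz$ without raising $d(x)$ above $6$, while $y$ and $z$ each gain at most one edge and stay within degree $6$. Now $d_2(v) \le 1 + 3 + (d_G(x)-1) + (d_G(y)-1) + (d_G(z)-1) \le 1 + 3 + 4 + 5 + 5 = 18 < 20$, so a free color remains for $v$, a contradiction.

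For parts (2) and (3) I would first apply part (1), so $d_G(x) = d_G(y) = d_G(z) = 6$. In part (2), a 3-face $[vxy]$ already supplies the edge $xy$, so after deleting $v$ it suffices to add the edge $xz$ (if absent); this raises $d(x)$ and $d(z)$ from $5$ to $6$ at worst, and then $x, y, z$ are pairwise within distance two ($xy$ and $xz$ are edges, and $y$ reaches $z$ through $x$). Since $x$ and $y$ are adjacent, both $N_G(x)$ and $N_G(y)$ already contain a vertex of $\{x, y, z\}$, so the second neighbourhood of $v$ is smaller and $d_2(v) \le 17 < 20$. In part (3), two $4$-faces at $v$ must share one of the edges at $v$, say $vy$, so they have the form $[vxay]$ and $[vybz]$ with $a \in N_G(x) \cap N_G(y)$ and $b \in N_G(y) \cap N_G(z)$; hence $x$ already reaches $y$ within distance two (through $a$) and $y$ reaches $z$ within distance two (through $b$), so after deleting $v$ it suffices to add the single edge $xz$ (if absent), again within the degree budget. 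Counting with the overlaps $a$ and $b$ gives $d_2(v) \le 17 < 20$, and the subcase $m_4(v) = 3$ is easier still, since then no edge need be added at all.

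The main obstacle is not the vertex counting, which is routine, but confirming in each case that $G'$ can be taken proper without violating $\Delta(G') \le 6$; this is exactly where each hypothesis is needed — a low-degree neighbour frees up room for new edges, a 3-face provides one of the required edges for free, and two $4$-faces provide two of the required length-two paths for free — and it also explains why an unrestricted 3-vertex is not itself reducible. Along the way I would record the routine bookkeeping: $G'$ is planar because the faces around the deleted 3-vertex $v$ merge into a single face on whose boundary $x, y, z$ all lie, so the new edges can be drawn inside it; $G'$ is simple because an edge is added only when it is absent; and $|V(G')| + |E(G')| < |V(G)| + |E(G)|$ because three edges are deleted and at most two are added, so the minimality of $G$ genuinely applies.
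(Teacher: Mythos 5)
Your proposal is correct and follows essentially the same route as the paper: in each part you delete $v$ and add just enough edges among its three neighbours to keep the graph proper (two edges from the low-degree neighbour in (1), one edge $xz$ in (2) and (3), exploiting the edge $xy$ of the 3-face or the common neighbours $a,b$ supplied by the two 4-faces), then count that at most $17$, $16$, and $16$ colors respectively appear within distance two of $v$. Your extra bookkeeping on planarity, simplicity, the degree bound $\Delta(G')\le 6$, and the decrease of $|V|+|E|$ is implicit in the paper but entirely consistent with it.
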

\begin{proof}
Let $v_1$, $v_2$, and $v_3$ be the neighbours of $v$.
(1) Assume that $v$ is adjacent to a $5^-$-vertex.
Without loss of generality, let $v_1$ be a $5^-$-vertex.
Let $G' = G - v + \{v_1v_2, v_1v_3\}$.
The graph $G'$ is proper with respect to $G$.
By the minimality of $G$, $G'$ has a 2-distance 20-coloring $\phi'$.
Let $\phi$ be a coloring of $G$ such that every vertex in $V(G)$, except for $v$, is colored using $\phi'$.
Since $\Delta \leq 6$, it follows that $|C_\phi(v)| \leq 17$ and $|C| - |C_\phi(v)| \geq 3$.
Therefore, there exists a safe color for $v$.
By coloring $v$ with the safe color, $\phi$ becomes a 2-distance 20-coloring of $G$, a contradiction.

(2) Assume that $v$ is incident to a 3-face $[vv_1v_2]$.
Let $G' = G - v + \{v_1v_3\}$.
The graph $G'$ is proper with respect to $G$.
By the minimality of $G$, $G'$ has a 2-distance 20-coloring $\phi'$.
Let $\phi$ be a coloring of $G$ such that every vertex in $V(G)$, except for $v$, is colored using $\phi'$.
Since $|C_\phi(v)| \leq 16$, we can color $v$ with a safe color, a contradiction.

(3) Assume that $v$ is incident to two 4-faces $[vv_1xv_2]$ and $[vv_2yv_3]$.
It is clear that $G' = G - v + \{v_1v_3\}$ is proper with respect to $G$.
By the minimality of $G$, $G'$ has a 2-distance 20-coloring $\phi'$.
Let $\phi$ be a coloring of $G$ such that every vertex in $V(G)$, except for $v$, is colored using $\phi'$.
Since $|C_\phi(v)| \leq 16$, we can color $v$ with a safe color, a contradiction.
\end{proof}

\begin{lem}\label{lem3}
Let $v$ be a 4-vertex. 
Then $m_3(v) \leq 2$.
In particular, if $m_3(v) = 2$, then $m_4(v) = 0$, $n_6(v) = 4$, and $m_3(w) \leq 4$ for any 6-vertex $w$ adjacent to $v$.
\begin{proof}
Let $v_1$, $v_2$, $v_3$, and $v_4$ be the neighbours of $v$ in clockwise order.
First, we show that $m_3(v) \leq 2$.
Assume that $v$ is incident to three 3-faces $[vv_1v_2]$, $[vv_2v_3]$, and $[vv_3v_4]$.
Let $G' = G - v + \{v_1v_4\}$.
The graph $G'$ is proper with respect to $G$.
By the minimality of $G$, $G'$ has a 2-distance 20-coloring $\phi'$.
Let $\phi$ be a coloring of $G$ such that every vertex in $V(G)$, except for $v$, is colored using $\phi'$.
Since $\Delta \leq 6$, it follows that $|C_\phi(v)| \leq 18$ and $|C| - |C_\phi(v)| \geq 2$.
Therefore, there exists a safe color for $v$.
By coloring $v$ with the safe color, $\phi$ becomes a 2-distance 20-coloring of $G$, a contradiction.

Now, we consider the case $m_3(v) = 2$.	
Let $f_1$ and $f_2$ be two 3-faces incident to $v$.
First, we show that $m_4(v) = 0$.
Assume that $v$ is incident to a 4-face $[vv_1xv_2]$.
If $f_1$ and $f_2$ are adjacent, say $f_1 = [vv_2v_3]$ and $f_2 = [vv_3v_4]$, then let $G' = G - v + \{v_1v_4\}$.
If $f_1$ and $f_2$ are not adjacent, say $f_1 = [vv_2v_3]$ and $f_2 = [vv_4v_1]$, then let $G' = G - v + \{v_3v_4\}$.
In both cases, $G'$ is proper with respect to $G$ and by the minimality of $G$, $G'$ has a 2-distance 20-coloring $\phi'$.
Let $\phi$ be a coloring of $G$ such that every vertex in $V(G)$, except for $v$, is colored using $\phi'$.
Since $|C_\phi(v)| \leq 19$ in each case, we can color $v$ with a safe color, a contradiction.

Next, we prove that $n_6(v) = 4$.
Assume that $v$ is incident to a $5^-$-vertex.
Without loss of generality, let $v_1$ be a $5^-$-vertex.
If $f_1$ and $f_2$ are adjacent, say $f_1 = [vv_1v_2]$ and $f_2 = [vv_2v_3]$, then let $G' = G - v + \{v_2v_4\}$.
If $f_1$ and $f_2$ are not adjacent, say $f_1 = [vv_1v_2]$ and $f_2 = [vv_3v_4]$, then let $G' = G - v + \{v_2v_3, v_4v_1\}$.
In both cases, $G'$ is proper with respect to $G$ and $d_2(v) \leq 19$, a contradiction.

Finally, we prove that $m_3(w) \leq 4$ for any 6-vertex $w$ adjacent to $v$.
To show that $w$ cannot be incident to more than five 3-faces, it suffices to prove that no edge in $G[N_G(v)]$ is contained in two 3-faces of $G$.
Let $f_1 = [vv_1v_2]$.
Assume that the edge $v_1v_2$ is contained in two 3-faces of $G$.
This implies that there exists a vertex $x$ such that $x$ is a common neighbour of $v_1$ and $v_2$.
If $f_1$ and $f_2$ are adjacent, say $f_2 = [vv_2v_3]$, then let $G' = G - v + \{v_2v_4\}$.
If $f_1$ and $f_2$ are not adjacent, say $f_2 = [vv_3v_4]$, then let $G' = G - v + \{v_2v_3, v_4v_1\}$.
In both cases, $G'$ is proper with respect to $G$ and $d_2(v) \leq 19$, a contradiction.
\end{proof}
\end{lem}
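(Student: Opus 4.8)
The plan is to reuse, essentially verbatim, the reducibility scheme of Lemmas~\ref{lem1} and~\ref{lem2}. Fix the $4$-vertex $v$ with $N_G(v)=\{v_1,v_2,v_3,v_4\}$ listed in the clockwise rotation around $v$. To rule out a configuration I delete $v$ from $G$ and add to $G-v$ a small set $E^{+}$ of non-edges among $v_1,\ldots,v_4$, obtaining $G'$. Three things have to be checked in each case: $G'$ is planar (after deleting $v$ the faces at $v$ merge into a single face whose boundary walk meets all of $v_1,\ldots,v_4$, so any family of pairwise non-crossing chords among the $v_i$ can be drawn inside it); $\Delta(G')\leq 6$ (each $v_i$ receiving a new edge has first lost its edge $vv_i$, so its degree does not increase); and $G'$ is proper with respect to $G$ (the only $G$-pairs at distance $\leq 2$ that deleting $v$ can spoil are pairs inside $N_G(v)$, and $E^{+}$ is chosen to restore each of them). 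Granting this, minimality of $G$ yields a $2$-distance $20$-coloring $\phi'$ of $G'$; pulling it back, the colours forbidden at $v$ are exactly those on the vertices at distance $1$ or $2$ from $v$ in $G$, so it suffices to prove $d_2(v)\leq 19$, i.e.\ $|C_\phi(v)|\leq 19$, to extend the colouring, a contradiction.

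First I would establish $m_3(v)\leq 2$. If $v$ lay on three $3$-faces, planarity would make them consecutive, say $[vv_1v_2],[vv_2v_3],[vv_3v_4]$, so $v_1v_2,v_2v_3,v_3v_4\in E(G)$; I take $E^{+}=\{v_1v_4\}$. The adjacencies among the $v_i$ then already realise every distance-$\leq 2$ pair except $(v_1,v_4)$, which the new edge repairs, so $G'$ is legal; and since $v_2,v_3$ each have at most three neighbours outside $\{v,v_1,v_2,v_3,v_4\}$ while $v_1,v_4$ have at most four, we get $d_2(v)\leq 4+(4+3+3+4)=18$, the required contradiction.

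Assuming now $m_3(v)=2$, I would let $f_1,f_2$ be the two $3$-faces at $v$ and split into the \emph{adjacent} case (they share an edge $vv_i$, so the triangle edges are $v_{i-1}v_i$ and $v_iv_{i+1}$) and the \emph{non-adjacent} case (triangle edges $v_1v_2$ and $v_3v_4$), adding in each case the one or two edges among $v_1,\ldots,v_4$ that restore the distance-$\leq 2$ pairs lost by deleting $v$ --- exactly the choices used in the proof of Lemma~\ref{lem2}, with the two chords in the non-adjacent case taken non-crossing. If $m_4(v)\geq 1$ then the $4$-face $[vv_jxv_{j+1}]$ exhibits a common neighbour $x$ of $v_j$ and $v_{j+1}$, which identifies two second neighbours of $v$; if instead some $v_i$ is a $5^{-}$-vertex, that vertex contributes one fewer second neighbour. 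In either situation the count falls to $d_2(v)\leq 19$, yielding the contradiction and hence $m_4(v)=0$ and $n_6(v)=4$.

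The clause I expect to be the main obstacle is $m_3(w)\leq 4$ for every $6$-vertex $w\in N_G(v)$, which I would reduce to the statement that \emph{no edge of $G[N_G(v)]$ lies on two $3$-faces of $G$}. For the reduction: write $w=v_p$; since $w$ has degree $6$ it sees six faces, two of them sharing the edge $vw$, and because $m_3(v)=2$ and $m_4(v)=0$ these two are either both $5^{+}$-faces, or one $3$-face and one $5^{+}$-face, or --- only if $f_1,f_2$ are adjacent and $w$ is their common vertex --- both $3$-faces. A short pigeonhole on the remaining faces around $w$ then shows that $m_3(w)\geq 5$ would force one of the triangle edges of $G[N_G(v)]$ incident with $w$ to bound a second $3$-face of $G$. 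For the statement itself: if an edge $v_pv_q$ of $G[N_G(v)]$ bounded $3$-faces $[vv_pv_q]$ and $[v_pv_qx]$ with $x\neq v$, then $x\notin\{v_p,v_q\}$ is a common neighbour of $v_p$ and $v_q$, and re-running the deletion-of-$v$ argument with the $E^{+}$ of the relevant subcase, this extra coincidence among the second neighbours of $v$ again brings the count to $d_2(v)\leq 19$, a contradiction. Making the pigeonhole airtight --- tracking which of the six faces around $w$ can still be $3$-faces once the two $3$-faces and two $5^{+}$-faces at $v$ are fixed --- is the one genuinely delicate point; the rest is the routine counting illustrated above.
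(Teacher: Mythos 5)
Your proposal is correct and follows essentially the same route as the paper: the same vertex-deletion-plus-chord constructions for each configuration, the same $d_2(v)$ counts, and the same reduction of the $m_3(w)\leq 4$ clause to the claim that no edge of $G[N_G(v)]$ lies on two $3$-faces. The only difference is that you spell out the pigeonhole step on the faces around $w$, which the paper leaves as an assertion; your version of that step checks out.
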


\begin{lem}\label{lem4}
Let $v$ be a 4-vertex with $m_3(v) = 1$. 
Then $m_4(v) \leq 2$.
In particular, if $1 \leq m_4(v) \leq 2$, then $n_4(v) = 0$, and $n_5(v) \leq 1$.
\end{lem}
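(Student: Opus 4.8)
We follow the scheme of Lemmas~\ref{lem1}--\ref{lem3}: delete $v$, add a few edges to obtain a graph $G'$ that is proper with respect to $G$, planar, has $\Delta(G')\le 6$ and $|V(G')|+|E(G')|<|V(G)|+|E(G)|$, take a $2$-distance $20$-colouring $\phi'$ of $G'$ by minimality, and note that $v$ then has a free colour. Fix the neighbours $v_1,v_2,v_3,v_4$ of $v$ in clockwise order and let $f_{12},f_{23},f_{34},f_{41}$ be the faces incident with $v$, with $f_{ij}$ between $vv_i$ and $vv_j$. By Lemma~\ref{lem2}(1) $v$ has no $3$-vertex neighbour, so each $v_i$ is a $4$-, $5$-, or $6$-vertex. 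Since $m_3(v)=1$ we may take $f_{12}=[vv_1v_2]$, so $v_1v_2\in E(G)$, and each $4$-face $f_{ij}$ incident with $v$ has the form $[vv_ix_{ij}v_j]$ with $x_{ij}\neq v$ a common neighbour of $v_i$ and $v_j$. Counting the vertices within distance two of $v$, and using that the edge $v_1v_2$ removes two of them from the trivial bound $1+4+4\cdot 5$, that each incident $4$-face identifies one of them, and that each neighbour of degree below $6$ contributes fewer (coincidences among the $x_{ij}$ or with the $v_i$ only help), one gets
\[ d_2(v)\ \le\ 23-m_4(v)-\sum_{i=1}^{4}\bigl(6-d_G(v_i)\bigr). \]
Thus if $d_2(v)\le 20$ then at most $19$ vertices other than $v$ lie within distance two of $v$, so $|C_\phi(v)|\le 19$ and a safe colour exists. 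Note $m_4(v)=3$ already forces $d_2(v)\le 20$, and if $m_4(v)\ge 1$ together with $n_4(v)\ge 1$ or $n_5(v)\ge 2$ then $\sum_i(6-d_G(v_i))\ge 2$, again giving $d_2(v)\le 20$; so in each case below it suffices to produce a valid $G'$.

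\emph{The bound $m_4(v)\le 2$.} Suppose $m_4(v)=3$, i.e., $f_{23},f_{34},f_{41}$ are $4$-faces. Let $G'=G-v+\{v_2v_3,v_1v_4\}$, omitting any edge already in $G$. In $G'$ the pairs $v_1v_2,v_2v_3,v_1v_4$ are edges, $v_3v_4$ is joined via $x_{34}$, $v_1v_3$ via $v_1v_2v_3$, and $v_2v_4$ via $v_2v_1v_4$, so $G'$ is proper. Each $v_i$ loses the edge $vv_i$ and gains exactly one new edge, so $\Delta(G')\le\Delta(G)\le 6$; the two new edges can be drawn without crossings along the portions of the new face of $G-v$ inherited from $f_{23}$ and from $f_{41}$, so $G'$ is planar; and $|V(G')|+|E(G')|=|V(G)|+|E(G)|-3$. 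This is the contradiction.

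\emph{The conditional part ($1\le m_4(v)\le 2$).} If some $v_i$ is a $4$-vertex, let $G'$ be $G-v$ with $v_i$ joined to the other three neighbours of $v$; then any two neighbours of $v$ share $v_i$ in $G'$ (so $G'$ is proper), all of $v_1,\dots,v_4$ lie on the boundary of the new face (so $G'$ is planar), and $d_{G'}(v_i)\le d_G(v_i)-1+3=6$. Contradiction, so $n_4(v)=0$. Next, if $v_1$ or $v_2$ (a vertex of the triangle $f_{12}$) is a $5$-vertex, the same star centred at it uses only two new edges, so again $d_{G'}(v_i)\le 6$ and we get a contradiction; hence we may assume $v_1,v_2$ are $6$-vertices, and then $n_5(v)\ge 2$ would force $v_3,v_4$ to be $5$-vertices. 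Since $v_1,v_2,v_3,v_4$ occur in this cyclic order around $v$, the chords $v_1v_3$ and $v_2v_4$ would cross, so we route differently: according as the $4$-faces at $v$ are $\{f_{23},f_{34}\}$, $\{f_{23},f_{41}\}$, $\{f_{34}\}$, or $\{f_{23}\}$, take $G'=G-v$ plus, respectively, $\{v_1v_4,v_2v_3\}$, $\{v_2v_3,v_3v_4\}$, $\{v_1v_4,v_2v_3\}$, or $\{v_2v_3,v_3v_4,v_1v_4\}$ (dropping any edge already present). In each case one verifies that all six pairs among $v_1,\dots,v_4$ are at distance at most two in $G'$ — from the edge $v_1v_2$, the apices $x_{ij}$ of the incident $4$-faces, and the new edges — so $G'$ is proper; that each new edge can be drawn along the portion of the new face inherited from a distinct incident $4$- or $5^{+}$-face of $v$, so $G'$ is planar; that only $v_3$ or $v_4$ (of degree $5$) receives two new edges, so $\Delta(G')\le 6$; and that $|V(G')|+|E(G')|<|V(G)|+|E(G)|$. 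Contradiction, so $n_5(v)\le 1$.

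\emph{Main obstacle.} The hard point is the conflict between planarity and the degree cap: the natural way to make $v_1,\dots,v_4$ pairwise within distance two uses the crossing chords $v_1v_3$, $v_2v_4$, so one of them must be simulated by a length-two path through $v_1$ or $v_2$, after which the surviving new edges must be charged only to vertices with spare degree (the $4$-vertex; the degree-$5$ vertices $v_3,v_4$; or $v_1,v_2$, which already carry the triangle edge $v_1v_2$). Tracking which incident $5^{+}$- and $4$-faces of $v$ supply disjoint arcs of the new face for these edges is exactly what forces the short case split on the positions of the $4$-faces. The degenerate coincidences — an apex $x_{ij}$ equal to some $v_\ell$, two incident $4$-faces sharing an apex, or an edge scheduled for addition already present — each only strengthen the count or simplify the embedding, and are dispatched in a line.
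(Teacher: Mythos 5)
Your proof is correct and follows essentially the same approach as the paper's: delete $v$, add a small set of non-crossing chords inside the merged face so that $G'$ is proper, invoke minimality, and count $d_2(v)\le 19$ (your explicit bound $d_2(v)\le 23-m_4(v)-\sum_i(6-d_G(v_i))$ just makes the paper's implicit counting precise). One presentational point: in the final subcase ($v_1,v_2$ of degree $6$, $v_3,v_4$ of degree $5$) your list of positions of the incident $4$-faces omits $\{f_{41}\}$ and $\{f_{34},f_{41}\}$; these reduce to $\{f_{23}\}$ and $\{f_{23},f_{34}\}$ via the reflection swapping $v_1\leftrightarrow v_2$ and $v_3\leftrightarrow v_4$, which preserves all hypotheses and should be stated.
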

\begin{proof}
Let $v_1$, $v_2$, $v_3$, and $v_4$ be the neighbours of $v$ in clockwise order and let $[vv_1v_2]$ be a 3-face incident to $v$.
First, we show that $m_4(v) \leq 2$.
Assume that $v$ is incident to three 4-faces $[vv_2xv_3]$, $[vv_3yv_4]$, and $[vv_4zv_1]$.
Let $G' = G - v + \{v_2v_3, v_1v_4\}$.
The graph $G'$ is proper with respect to $G$.
By the minimality of $G$, $G'$ has a 2-distance 20-coloring $\phi'$.
Let $\phi$ be a coloring of $G$ such that every vertex in $V(G)$, except for $v$, is colored using $\phi'$.
Since $\Delta \leq 6$, it follows that $|C_\phi(v)| \leq 19$ and $|C| - |C_\phi(v)| \geq 1$.
Therefore, there exists a safe color for $v$.
By coloring $v$ with the safe color, $\phi$ becomes a 2-distance 20-coloring of $G$, a contradiction.

Now, we suppose that $m_4(v) = 2$.
Let $f_1$ and $f_2$ be two 4-faces incident to $v$.
First, we prove that $n_4(v) = 0$.
Assume that $v$ is adjacent to a 4-vertex.
Let $v_1$ be a 4-vertex.
Regardless of whether $f_1$ and $f_2$ are adjacent, let $G' = G - v + \{v_1v_3, v_1v_4\}$.
(If a vertex $v_i \in N_G(v)$ other than $v_1$ is a 4-vertex, then we construct $G'$ by deleting $v$ and adding edges from $v_i$ to each neighbour $v_j$ of $v$ with $v_iv_j \notin E(G)$.)
The graph $G'$ is proper with respect to $G$.
By the minimality of $G$, $G'$ has a 2-distance 20 coloring $\phi'$.
Let $\phi$ be a coloring of $G$ such that every vertex in $V(G)$, except for $v$, is colored using $\phi'$.
Since $|C_\phi(v)| \leq 18$, there exists a safe color for $v$, a contradiction.
Second, we show that $n_5(v) \leq 1$.
Assume that $v$ is adjacent to two 5-vertices.
There are six possible arrangements of two 5-vertices among the four neighbors of $v$.
Regardless of whether $f_1$ and $f_2$ are adjacent, the construction of $G'$ depends on which neighbours of $v$ are the two 5-vertices.
If $v_1$ is one of the two 5-vertices, then we construct $G' = G - v + \{v_1v_3, v_1v_4\}$.
Similarly, if $v_2$ is one of the two 5-vertices, then we construct $G' = G - v + \{v_2v_3, v_2v_4\}$.
Otherwise, if $v_3$ and $v_4$ are two 5-vertices, then we construct $G' = G - v + \{v_2v_3, v_3v_4, v_4v_1\}$.  
In all cases, $G'$ is proper with respect to $G$.
By the minimality of $G$, $G'$ has a 2-distance 20-coloring $\phi'$.
Let $\phi$ be a coloring of $G$ such that every vertex in $V(G)$, except for $v$, is colored using $\phi'$.
Since $|C_\phi(v)| \leq 18$, we can color $v$ with a safe color, a contradiction.

Next, we suppose that $m_4(v) =1$.
Let $f_1$ be a 4-face incident to $v$.
The proof of $n_4(v) = 0$ is similar to the proof when we supposed that $m_4(v) = 2$.
Assume that $v$ is adjacent to a 4-vertex and let $v_1$ be a 4-vertex.
We construct $G'$ in the same way as before, regardless of the position of $f_1$: $G' = G - v + \{v_1v_3, v_1v_4\}$.
The graph $G'$ is proper with respect to $G$.
By the minimality of $G$, $G'$ has a 2-distance 20-coloring $\phi'$.
Let $\phi$ be a coloring of $G$ such that every vertex in $V(G)$, except for $v$, is colored using $\phi'$.
The only difference is that, in this case, $|C_\phi(v)| \leq 19$.
We can color $v$ with a safe color, a contradiction.
Finally, we prove that $n_5(v) \leq 1$.
Assume that $v$ is adjacent to two 5-vertices.
There are six possible arrangements of two 5-vertices among the four neighbors of $v$.
We consider two cases based on the position of $f_1$.
Case 1: $f_1 = [vv_2xv_3]$.
If $v_1$ is one of the two 5-vertices, then we construct $G' = G - v + \{v_1v_3, v_1v_4\}$.
Similarly, if $v_2$ is one of the two 5-vertices, then we construct $G' = G - v + \{v_2v_3, v_2v_4\}$.
Otherwise, if $v_3$ and $v_4$ are the two 5-vertices, then we construct $G' = G - v + \{v_2v_3, v_3v_4, v_4v_1\}$.
Case 2: $f_1 = [vv_3xv_4]$.
In this case, we construct $G' = G - v + \{v_2v_3, v_4v_1\}$, regardless of which neighbours of $v$ are the two 5-vertices.
In all cases, $G'$ is proper with respect to $G$.
Since $d_2(v) \leq 19$, there exists a safe color for $v$, a contradiction.
\end{proof}

Now, we discuss the properties of a 5-vertex in $G$.
Let $v$ be a 5-vertex and let $v_1,v_2, \ldots, v_5$ be the neighbours of $v$ in clockwise order.

\begin{lem}\label{lem5}
Let $v$ be a 5-vertex. 
If $m_3(v) = 5$, then $n_6(v) = 5$ and $m_3(w) \leq 4$ for any 6-vertex $w$ adjacent to $v$.
\begin{proof}
Suppose that $v$ is incident to five 3-faces.
First, we show that $n_6(v) = 5$.
Assume that $v$ is adjacent to a $5^-$-vertex.
Without loss of generality, let $v_1$ be a $5^-$-vertex.
It is clear that $G' = G - v$ is proper with respect to $G$.
By the minimality of $G$, $G'$ has a 2-distance 20-coloring $\phi'$.
Let $\phi$ be a coloring of $G$ such that every vertex in $V(G)$, except for $v$, is colored using $\phi'$.
Since $\Delta \leq 6$, it follows that $|C_\phi(v)| \leq 19$ and $|C| - |C_\phi(v)| \geq 1$.
Therefore, there exists a safe color for $v$.
By coloring $v$ with the safe color, $\phi$ becomes a 2-distance 20-coloring of $G$, a contradiction.

Next, we prove that $m_3(w) \leq 4$ for any 6-vertex $w$ adjacent to $v$.
To show that $w$ cannot be incident to more than five 3-faces, it suffices to prove that no edge in $G[N_G(v)]$ is contained in two 3-faces of $G$.
Without loss of generality, we assume that the edge $v_1v_2$ is contained in two 3-faces.
This implies that there exists a vertex $x$ such that $x$ is a common neighbour of $v_1$ and $v_2$.
It is clear that $G' = G - v$ is proper with respect to $G$.
Since $d_2(v) \leq 19$, there exists a safe color for $v$, a contradiction.
\end{proof}
\end{lem}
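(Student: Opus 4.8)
The plan is to apply the reduction scheme that runs through Section~2. Assuming the configuration occurs in the minimum counterexample $G$, we delete $v$ to form $G'=G-v$, which is trivially proper with respect to $G$; by minimality $G'$ has a $2$-distance $20$-coloring $\phi'$, and it extends to $G$ as soon as $|C_\phi(v)|\le 19$, contradicting $\chi_2(G)>20$. So for both conclusions the whole problem is to bound the number of vertices within distance two of $v$.

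Since $m_3(v)=5$, the neighbours $v_1,\dots,v_5$ form a $5$-cycle with $v_iv_{i+1}\in E(G)$ for all $i$ (indices modulo $5$). Hence each $v_i$ is adjacent to $v$, $v_{i-1}$, $v_{i+1}$, so it contributes at most $d(v_i)-3$ vertices to the set of second neighbours of $v$, and $d_2(v)\le 1+5+\sum_{i=1}^{5}(d(v_i)-3)$. If some neighbour, say $v_1$, were a $5^-$-vertex it would contribute at most $2$ and the remaining four at most $3$ each, giving at most $5+(2+4\cdot 3)=19$ vertices within distance two of $v$ other than $v$ itself, so $|C_\phi(v)|\le 19$ and $G-v$ is reducible. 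Thus $n_6(v)=5$.

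For the second conclusion every $v_i$ is now a $6$-vertex. Let $w$ be a $6$-vertex adjacent to $v$, say $w=v_1$, and suppose for contradiction that $m_3(v_1)\ge 5$. The $3$-faces $[vv_5v_1]$ and $[vv_1v_2]$ place $v$ between $v_5$ and $v_2$ in the rotation at $v_1$, so the face $F_a$ on $v_1v_2$ other than $[vv_1v_2]$ and the face $F_b$ on $v_1v_5$ other than $[vv_5v_1]$ are two distinct faces among the six incident to $v_1$. As at most one of those six faces is not a triangle, $F_a$ or $F_b$ is a $3$-face; say $F_a=[v_1v_2x]$ with $x\ne v$. Then $x$ is a common neighbour of $v_1,v_2\in N_G(v)$, that is, an edge of $G[N_G(v)]$ lies on two $3$-faces of $G$. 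If $x\notin\{v_1,\dots,v_5\}$ then $x$ lies in both $N_G(v_1)\setminus\{v,v_5,v_2\}$ and $N_G(v_2)\setminus\{v,v_1,v_3\}$, so it is counted twice in $\sum_{i}(d(v_i)-3)=15$; if $x=v_j$ for some $j$ then a chord appears among the $v_i$ and the $v_1$- or $v_2$-term of that sum drops by at least $1$. Either way $v$ has at most $5+14=19$ vertices within distance two besides itself, $|C_\phi(v)|\le 19$, and $G-v$ gives a contradiction. Hence $m_3(w)\le 4$.

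The counting is routine; the delicate point is the rotation argument in the last paragraph. One has to be sure that the two $3$-faces through $v$ really fix $v$'s position in the link of $v_1$ so that $v_1v_2$ and $v_1v_5$ both lie in $G[N_G(v)]$ and $F_a\ne F_b$, and that $m_3(v_1)\ge 5$ (not merely $=6$) already forces one of $F_a,F_b$ to be a triangle. One should also confirm that the degenerate case $x=v_j$ only improves the bound, so it can be folded into the main count rather than handled separately.
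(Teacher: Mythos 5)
Your proposal is correct and follows essentially the same route as the paper: delete $v$, invoke minimality of $G$, and verify $d_2(v)\le 19$ by the cycle structure of $N_G(v)$ forced by $m_3(v)=5$, with the second part reduced to showing that some edge $v_iv_{i+1}$ lying on two $3$-faces would make a second neighbour be double-counted. Your explicit rotation argument at $w=v_1$ and the separate treatment of the chord case $x=v_j$ are just careful spellings-out of steps the paper leaves implicit.
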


\begin{lem}\label{lem6}
Let $v$ be a 5-vertex with $m_3(v) = 4$ and $m_4(v)$ = 1.
Then $n_{4^-}(v) = 0$, $n_5(v) \leq 1$, and $m_3(w) \leq 5$ for any 6-vertex $w$ adjacent to $v$. 
\begin{proof}
Suppose that $v$ is incident to four 3-faces $[vv_1v_2]$, $[vv_2v_3]$, $[vv_3v_4]$, $[vv_4v_5]$, and one 4-face $[vv_5xv_1]$.
First, we show that $n_{4^-}(v) = 0$.
By Lemma~\ref{lem2}(1), $v$ is not adjacent to any 3-vertex.
Thus it suffices to show that $v$ is not adjacent to any 4-vertex.
Assume that $v$ is adjacent to a 4-vertex.
Let $G' = G - v + \{v_5v_1\}$.
The graph $G'$ is proper with respect to $G$.
By the minimality of $G$, $G'$ has a 2-distance 20-coloring $\phi'$.
Let $\phi$ be a coloring of $G$ such that every vertex in $V(G)$, except for $v$, is colored using $\phi'$.
Since $\Delta \leq 6$, it follows that $|C_{\phi}(v)| \leq 19$ and $|C| - |C_\phi(v)| \geq 1$.
Therefore, there exists a safe color for $v$.
By coloring $v$ with the safe color, $\phi$ becomes a 2-distance 20-coloring of $G$, a contradiction.

Next, we assume that $v$ is incident to two 5-vertices.
It is clear that $G' = G - v + \{v_5v_1\}$ is proper with respect to $G$.
By the minimality of $G$, $G'$ has a 2-distance 20-coloring $\phi'$.
Let $\phi$ be a coloring of $G$ such that every vertex in $V(G)$, except for $v$, is colored using $\phi'$.
Since $|C_{\phi}(v)| \leq 19$, we can color $v$ with a safe color, a contradiction.
Thus $n_5(v) \leq 1$ holds.
This implies that $n_6(v) \geq 4$.

Finally, we prove that $m_3(w) \leq 5$ for any 6-vertex $w$ adjacent to $v$.
Assume that $v_1$ is a 6-vertex.
Since $v_1$ is incident to a 4-face, $v_1$ can be incident to at most five 3-faces.
By symmetry, the same holds if we assume that $v_5$ is a 6-vertex.
Next, we prove that if $v_2$, $v_3$, or $v_4$ is a 6-vertex, then it can be incident to at most five 3-faces.
Without loss of generality, we assume that $v_2$ is a 6-vertex and is incident to six 3-faces.
In this case, each of the edges $v_1v_2$ and $v_2v_3$ is contained in two 3-faces.
Let $G' = G - v + \{v_1v_5\}$.
The graph $G'$ is proper with respect to $G$.
Since $d_2(v) \leq 19$, there exists a safe color for $v$, a contradiction.
\end{proof}
\end{lem}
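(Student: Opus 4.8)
The plan is to argue exactly as in the earlier lemmas: fix a planar embedding so that the faces incident to $v$ are, in clockwise order, the $3$-faces $[vv_1v_2]$, $[vv_2v_3]$, $[vv_3v_4]$, $[vv_4v_5]$ and the $4$-face $[vv_5xv_1]$, assume that one of the three assertions fails, and then build a $2$-distance $20$-coloring of $G$, contradicting its minimality. A single auxiliary graph serves all three parts: $G' = G - v + \{v_1v_5\}$ (and $G' = G - v$ in the degenerate case $v_1v_5 \in E(G)$).

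I would first verify once and for all that $G'$ is proper with respect to $G$, planar, has $\Delta(G') \le 6$, and is smaller than $G$. For properness, the only pairs whose mutual distance can increase after deleting $v$ lie inside $N_G(v) = \{v_1,\dots,v_5\}$; of the ten such pairs, $v_1v_2$, $v_2v_3$, $v_3v_4$, $v_4v_5$ are already edges, $v_1v_5$ is the new edge, the pairs $v_1v_3$, $v_2v_4$, $v_3v_5$ keep the common neighbours $v_2$, $v_3$, $v_4$, and $v_1v_4$, $v_2v_5$ acquire the length-two paths $v_1v_5v_4$ and $v_2v_1v_5$. For planarity, add $v_1v_5$ inside the face that replaces the five deleted faces, whose boundary runs through $v_1,v_2,v_3,v_4,v_5,x$. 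For degrees, only $d(v_1)$ and $d(v_5)$ change, each losing its edge to $v$ and gaining $v_1v_5$, so their degrees are unchanged and $\Delta(G') \le 6$. Since $|V(G')| + |E(G')| < |V(G)| + |E(G)|$, minimality yields a $2$-distance $20$-coloring $\phi'$ of $G'$, and restricting it to $G - v$ gives a partial coloring $\phi$ of $G$ whose only uncoloured vertex is $v$.

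It then suffices, in each part, to show $d_2(v) \le 19$, since then $|C_\phi(v)| \le d_2(v) \le 19$ leaves a colour free for $v$. The vertices within distance two of $v$ are $v_1,\dots,v_5$, the vertex $x$, and the remaining neighbours of the $v_i$; since each $v_i$ already has three neighbours among $\{v, v_{i-1}, v_{i+1}\}$, $\{v, v_{i-1}, x\}$ or $\{v, v_{i+1}, x\}$, it contributes at most $d(v_i) - 3$ further vertices, giving the crude estimate $d_2(v) \le 5 + 1 + 5\cdot 3 = 21$. For (1): a $3$-vertex neighbour of $v$ is impossible by Lemma~\ref{lem2}(1), and if some $v_i$ were a $4$-vertex it would contribute at most $4 - 3 = 1$ further vertex instead of $3$, so $d_2(v) \le 19$. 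For (2): two $5$-vertex neighbours would each contribute at most $5 - 3 = 2$ further vertices, again cutting the estimate to $d_2(v) \le 19$; hence $n_5(v) \le 1$, which together with (1) forces $n_6(v) \ge 4$. For (3): if $w = v_i$ with $i \in \{1,5\}$, then the $4$-face $[vv_5xv_1]$ is incident to $w$, so $m_3(w) \le 5$ trivially; if $i \in \{2,3,4\}$ and $w$ were incident to six $3$-faces, then both edges $v_{i-1}v_i$ and $v_iv_{i+1}$ would lie on a second $3$-face, so two of $w$'s three ``extra'' neighbours are already neighbours of $v_{i-1}$ and $v_{i+1}$ and are not new, whence $w$ contributes at most one further vertex and $d_2(v) \le 19$, a contradiction.

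The main difficulty is bookkeeping rather than any single idea: one has to check that the estimate $d_2(v) \le 21$ and each of its two-unit refinements survive all the identifications a planar graph allows (for example $x$ coinciding with some $v_i$, the apex of a second $3$-face coinciding with another $v_j$ or with $x$, or ``extra'' neighbours of distinct $v_i$ being identified), and that the single added edge $v_1v_5$ genuinely restores every distance-two relation destroyed by removing $v$. Once the properness and planarity of $G'$ have been confirmed carefully, parts (1)--(3) reduce to three short applications of the same observation: a low-degree neighbour, a second low-degree neighbour, or an incident face saturated by $3$-faces each removes at least two of the colours that could otherwise block $v$.
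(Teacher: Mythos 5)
Your proposal is correct and follows essentially the same route as the paper: the same auxiliary graph $G' = G - v + \{v_1v_5\}$ is used for all three parts, properness is checked on the pairs inside $N_G(v)$, and each conclusion is obtained by showing that the relevant hypothesis (a $4$-vertex neighbour, two $5$-vertex neighbours, or a neighbour saturated by $3$-faces) forces $d_2(v) \leq 19$. Your write-up is in fact somewhat more careful than the paper's, e.g.\ in verifying properness pair by pair and in handling the degenerate case $v_1v_5 \in E(G)$.
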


\begin{lem}\label{lem7}
Let $v$ be a 5-vertex with $m_3(v) = 4$ and $m_{5^+}(v)$ = 1.
Then $n_3(v) = 0$, $n_4(v) \leq 1$, and $n_5(v) \leq 2$.
In particular, if $n_4(v) = 1$, then $n_5(v) = 0$.
\begin{proof}
Suppose that $v$ is incident to four 3-faces $[vv_1v_2]$, $[vv_2v_3]$, $[vv_3v_4]$, $[vv_4v_5]$, and one $5^+$-face that contains $v_1$ and $v_5$. 
Obviously, we have $n_3(v) = 0$ by Lemma~\ref{lem2}(1).
Now, we show that $n_4(v) \leq 1$.
Assume that $v$ is adjacent to two 4-vertices.
Let $G' = G - v + \{v_1v_5\}$.
The graph $G'$ is proper with respect to $G$.
By the minimality of $G$, $G'$ has a 2-distance 20-coloring $\phi'$.
Let $\phi$ be a coloring of $G$ such that every vertex in $V(G)$, except for $v$, is colored using $\phi'$.
Since $\Delta \leq 6$, it follows that $|C_{\phi}(v)| \leq 18$ and $|C| - |C_\phi(v)| \geq 2$.
Therefore, there exists a safe color for $v$.
By coloring $v$ with the safe color, $\phi$ becomes a 2-distance 20-coloring of $G$, a contradiction.

Next, we prove that $n_5(v) \leq 2$.
Assume that $v$ is adjacent to three 5-vertices.
It is clear that $G' = G - v + \{v_1v_5\}$ is proper with respect to $G$.
By the minimality of $G$, $G'$ has a 2-distance 20-coloring $\phi'$.
Let $\phi$ be a coloring of $G$ such that every vertex in $V(G)$, except for $v$, is colored using $\phi'$.
Since $|C_{\phi}(v)| \leq 19$, we can color $v$ with a safe color, a contradiction.

Finally, we consider the case $n_4(v) = 1$.
Assume that $v$ is adjacent to a 5-vertex.
Obviously, $G' = G - v + \{v_1v_5\}$ is proper with respect to $G$.
Since $d_2(v) \leq 19$, there exists a safe color for $v$, a contradiction.
\end{proof}
\end{lem}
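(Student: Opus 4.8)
The plan is to follow the deletion-and-extension template used throughout this section. First I would fix the local picture: after relabelling the neighbours, I may assume the four $3$-faces incident with $v$ are $[vv_1v_2]$, $[vv_2v_3]$, $[vv_3v_4]$, $[vv_4v_5]$ and the remaining incident face is a $5^+$-face on which $v_1$ and $v_5$ are the two neighbours of $v$; in particular $v_1v_2, v_2v_3, v_3v_4, v_4v_5 \in E(G)$. The assertion $n_3(v) = 0$ is then immediate from Lemma~\ref{lem2}(1), since a $3$-vertex has no $5^-$-neighbour and hence cannot be adjacent to the $5$-vertex $v$. For each of the remaining three assertions I would argue by contradiction using the single reduction $G' = G - v + \{v_1v_5\}$ (or $G' = G - v$ if $v_1v_5 \in E(G)$ already): since $v_1$ and $v_5$ lie on a common face, $G'$ is planar; since the edge $vv_1$ is removed while at most one edge is added at $v_1$, and likewise at $v_5$, we get $\Delta(G') \le 6$; and $G'$ is proper with respect to $G$, because the only pair at distance two in $G$ that deleting $v$ could separate is $\{v_1, v_5\}$, which is made adjacent in $G'$. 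By the minimality of $G$, $G'$ has a $2$-distance $20$-colouring $\phi'$; let $\phi$ be its restriction to $V(G) \setminus \{v\}$. It then suffices to show $|C_\phi(v)| \le 19$, for then a colour is free for $v$, contradicting $\chi_2(G) > 20$.

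The heart of the argument is a short second-neighbourhood count. Each of $v_2, v_3, v_4$ is adjacent to $v$ and to its two neighbours in $N_G(v)$, so has at most $d(v_i) - 3$ neighbours outside $N_G[v]$; each of $v_1, v_5$ is adjacent to $v$ and to one neighbour in $N_G(v)$ forced by a $3$-face, so has at most $d(v_i) - 2$ such neighbours. Adding $v_1, \dots, v_5$ themselves, the number of vertices other than $v$ within distance two of $v$ is at most $5 + (d(v_1) - 2) + (d(v_2) - 3) + (d(v_3) - 3) + (d(v_4) - 3) + (d(v_5) - 2)$, which equals $22$ when every $v_i$ is a $6$-vertex. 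Replacing a $6$-neighbour by a $4$-neighbour lowers this bound by $2$, and by a $5$-neighbour lowers it by $1$, regardless of the neighbour's position. So two $4$-neighbours force the bound down to $18$, three $5$-neighbours force it to $19$, and one $4$-neighbour together with one $5$-neighbour forces it to $19$; in each case $|C_\phi(v)| \le 19$, $\phi$ extends to $v$, and we reach a contradiction. This yields $n_4(v) \le 1$, $n_5(v) \le 2$, and that $n_4(v) = 1$ implies $n_5(v) = 0$.

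The hard part will be making the counting step airtight: one must check that the displayed inequality is genuinely conservative, i.e. that extra chords inside $G[N_G(v)]$ and coincidences among the sets $N_G(v_i) \setminus N_G[v]$ can only decrease the count, so that the bound holds in every configuration. The positional bookkeeping is by contrast routine, because the repair edge $v_1v_5$ is the same no matter which neighbours have small degree or how they sit around $v$; this is the same phenomenon already exploited in the proofs of Lemmas~\ref{lem5} and~\ref{lem6}, and the verification that $G'$ is proper is uniform across all the cases.
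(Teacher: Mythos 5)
Your proposal is correct and takes essentially the same route as the paper: the same local configuration, the same reduction $G' = G - v + \{v_1v_5\}$ for all three assertions, and your explicit count $5 + (d(v_1)-2) + (d(v_2)-3) + (d(v_3)-3) + (d(v_4)-3) + (d(v_5)-2) \le 22$ correctly reproduces the bounds $|C_\phi(v)| \le 18$ and $\le 19$ that the paper merely asserts. One small imprecision in your properness check: deleting $v$ can separate not only $\{v_1,v_5\}$ but also $\{v_1,v_4\}$ and $\{v_2,v_5\}$; since the added edge $v_1v_5$ together with the triangle edges $v_1v_2$ and $v_4v_5$ restores all three pairs to distance at most two, the conclusion is unaffected.
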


\begin{lem}\label{lem8}
Let $v$ be a 5-vertex with $m_3(v) = 4$ and $m_{5^+}(v)$ = 1.
Then the number of 4-vertices, 5-vertices, and 6-vertices adjacent to $v$ must be one of the following:
\begin{enumerate}
	\item[(a)] ($n_4(v)$, $n_5(v)$, $n_6(v)$) = (1, 0, 4), 
	\item[(b)] ($n_4(v)$, $n_5(v)$, $n_6(v)$) = (0, 2, 3), 
	\item[(c)] ($n_4(v)$, $n_5(v)$, $n_6(v)$) = (0, 1, 4), or
	\item[(d)] ($n_4(v)$, $n_5(v)$, $n_6(v)$) = (0, 0, 5).
	\end{enumerate}
	
Moreover, let $w$ be any 6-vertex adjacent to $v$. Then the following hold:
\begin{enumerate}
	\item[(1)] If $v$ is in case (a), then $m_3(w) \leq 4$.
	\item[(2)] If $v$ is in case (b), then $m_3(w) \leq 4$.
	\item[(3)] If $v$ is in case (c), then there exists at least one 6-vertex $w$ with $m_3(w) \leq 5$. 
	\item[(4)] If $v$ is in case (d), then there exist at least two 6-vertices $w_1$, $w_2$ with $m_3(w_1) \leq 5$ and $m_3(w_2) \leq 5$.
	\end{enumerate}
\end{lem}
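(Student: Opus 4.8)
I would establish the three assertions separately. The classification of the triple $(n_4(v),n_5(v),n_6(v))$ is immediate: by Lemma~\ref{lem1} we have $\delta(G)\ge 3$, and by Lemma~\ref{lem2}(1) no $3$-vertex is adjacent to a $5^-$-vertex, so the $5$-vertex $v$ has $n_3(v)=0$ and hence $n_4(v)+n_5(v)+n_6(v)=5$; intersecting this with the constraints of Lemma~\ref{lem7} (namely $n_4(v)\le 1$, $n_5(v)\le 2$, and $n_4(v)=1\Rightarrow n_5(v)=0$) leaves precisely the four triples (a)--(d).

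For assertions (3) and (4) I would use only the position of the $5^+$-face. Label the four $3$-faces at $v$ as $[vv_1v_2],[vv_2v_3],[vv_3v_4],[vv_4v_5]$; then the unique $5^+$-face at $v$ is incident to the edges $vv_1$ and $vv_5$, hence is incident to both $v_1$ and $v_5$. Consequently a $6$-vertex lying in $\{v_1,v_5\}$ is incident to at least one non-triangular face, so has $m_3(\cdot)\le 5$. In case (c) only one neighbour of $v$ is a $5$-vertex, so at least one of $v_1,v_5$ is a $6$-vertex, which gives (3); in case (d) all of $v_1,\dots,v_5$ are $6$-vertices, so $v_1$ and $v_5$ both qualify, which gives (4).

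For assertions (1) and (2) I would argue by contradiction in the style of the earlier lemmas. Suppose $v$ is in case (a) or (b) and some $6$-vertex $w=v_i$ adjacent to $v$ has $m_3(w)\ge 5$. Put $G'=G-v+\{v_1v_5\}$ (or $G'=G-v$ when $v_1v_5\in E(G)$); $G'$ is proper with respect to $G$ because the only distance-$\le 2$ pairs of $G$ that can be destroyed lie inside $\{v_1,\dots,v_5\}$, and each is again at distance $\le 2$ in $G'$ via the path $v_1v_2v_3v_4v_5$ together with the edge $v_1v_5$. By minimality $G'$ has a $2$-distance $20$-coloring, so it suffices to prove $d_2(v)\le 19$. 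Since $v_1$ and $v_5$ each have at least two neighbours in $N_G[v]$ (namely $v$ and $v_2$, resp.\ $v$ and $v_4$) while $v_2,v_3,v_4$ have at least three each, the crude count gives $d_2(v)\le\sum_{i=1}^{5}d(v_i)-8$, which equals $20$ in both cases (a) and (b). It then remains to extract one further coincidence from $m_3(w)\ge 5$: inspecting the faces around the $6$-vertex $w$, among the triangular faces other than the two that pass through $v$ at least one is bounded entirely by vertices of $N_G[v]$, and it supplies either a common neighbour, different from $v$, of two consecutive $v_j$'s, or an extra edge inside $N_G[v]$; in either case two of the sets $N_G(v_j)\setminus N_G[v]$ overlap, or one of them is smaller than the crude estimate, so $d_2(v)\le 19$, a contradiction. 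Hence every $6$-vertex adjacent to $v$ has $m_3(\cdot)\le 4$, proving (1) and (2).

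The main obstacle is the last step, the face bookkeeping around $w$, which has to be carried out uniformly across the positions of $w$: for $w\in\{v_1,v_5\}$ the only possibility is $m_3(w)=5$, which forces the unique non-triangular face at $w$ to be the $5^+$-face and therefore makes the triangular face of $w$ incident to the edge $wv_2$ (resp.\ $wv_4$) lie inside $N_G[v]$; for $w\in\{v_2,v_3,v_4\}$ one must allow $m_3(w)\in\{5,6\}$ and verify that in every admissible arrangement at least one of the two triangular faces flanking the edges from $w$ to $N_G(v)$ is present. One also has to confirm that the coincidence produced is genuinely new, i.e.\ not already absorbed into the crude count; the degenerate subcases ($v_1v_5\in E(G)$, or the coincidence vertex being one of $v_1,\dots,v_5$) only make $d_2(v)$ smaller and need no separate treatment.
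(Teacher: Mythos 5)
Your proposal is correct and follows essentially the same route as the paper: the classification is read off from Lemma~\ref{lem7}, parts (3) and (4) come from $v_1,v_5$ lying on the $5^+$-face, and parts (1) and (2) are proved by showing that if some edge $v_jv_{j+1}$ of $G[N_G(v)]$ lay in two $3$-faces, then $G'=G-v+\{v_1v_5\}$ would be proper and $d_2(v)\leq 19$, a contradiction. In fact you supply two details the paper leaves implicit --- the face bookkeeping around $w$ showing that $m_3(w)\geq 5$ forces such a doubled edge, and the explicit count reducing $d_2(v)$ from $20$ to $19$ --- with only the cosmetic slip that the flanking triangle is not ``bounded entirely by vertices of $N_G[v]$'' (its third vertex may lie outside), which your next clause already corrects.
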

\begin{proof}
The first statement of the lemma follows directly from Lemma~\ref{lem7}.
We now prove the remaining statements, from (1) to (4).
Suppose that $v$ is incident to four 3-faces $[vv_1v_2]$, $[vv_2v_3]$, $[vv_3v_4]$, $[vv_4v_5]$, and one $5^+$-face that contains $v_1$ and $v_5$. 

(1) To show that $w$ cannot be incident to more than five 3-faces, it suffices to prove that no edge in $G[N_G(v)]$ is contained in two 3-faces of $G$.
Assume that the edge $v_1v_2$ is contained in two 3-faces.
It is clear that $G' = G - v + \{v_5v_1\}$ is proper with respect to $G$.
By the minimality of $G$, $G'$ has a 2-distance 20-coloring $\phi'$.
Let $\phi$ be a coloring of $G$ such that every vertex in $V(G)$, except for $v$, is colored using $\phi'$.
Since $n_4(v) = 1$ and $n_6(v) = 4$, it follows that $|C_{\phi}(v)| \leq 19$ and $|C| - |C_\phi(v)| \geq 1$.
Therefore, there exists a safe color for $v$.
By coloring $v$ with the safe color, $\phi$ becomes a 2-distance 20-coloring of $G$, a contradiction.

(2) The proof is similar to that of (1). To show that $w$ cannot be incident to more than five 3-faces, it suffices to prove that no edge in $G[N_G(v)]$ is contained in two 3-faces of $G$. Assume that the edge $v_1v_2$ is contained in two 3-faces. 
As in (1), let $G' = G - v + \{v_5v_1\}$, which is proper with respect to $G$. 
By the minimality of $G$, $G'$ has a 2-distance 20-coloring $\phi'$.
 Let $\phi$ be a coloring of $G$ such that every vertex in $V(G)$, except for $v$, is colored using $\phi'$.
In case (b), we have $n_5(v) = 2$ and $n_6(v) = 3$, which leads to $|C_{\phi}(v)| \leq 19$.
Therefore, there exists a safe color for $v$. By coloring $v$ with the safe color, $\phi$ becomes a 2-distance 20-coloring of $G$, a contradiction.

(3) In case (c), we have $n_5(v) = 1$ and $n_6(v) = 4$.
It follows that at least one of $v_1$ and $v_5$ must be a 6-vertex.
Let $w$ be such a 6-vertex.
Since $w$ is incident to one $5^+$-face, it can be incident to at most five 3-faces.
Therefore, (3) holds.

(4) In case (d), all neighbours of $v$ are 6-vertices.
It follows that both $v_1$ and $v_5$ are 6-vertices.
Let $w_1 = v_1$ and $w_2 = v_5$.
Since each of $w_1$ and $w_2$ is incident to one $5^+$-face, it can be incident to at most five 3-faces.
Therefore, (4) holds.
\end{proof}

Next, we examine the properties of a 6-vertex in $G$.
Let $v$ be a 6-vertex and let $v_1,v_2, \ldots, v_6$ be the neighbours of $v$ in clockwise order.

\begin{figure}[ht]
  \begin{minipage}[b]{0.45\linewidth}
    \centering
    \includegraphics[scale=0.24]{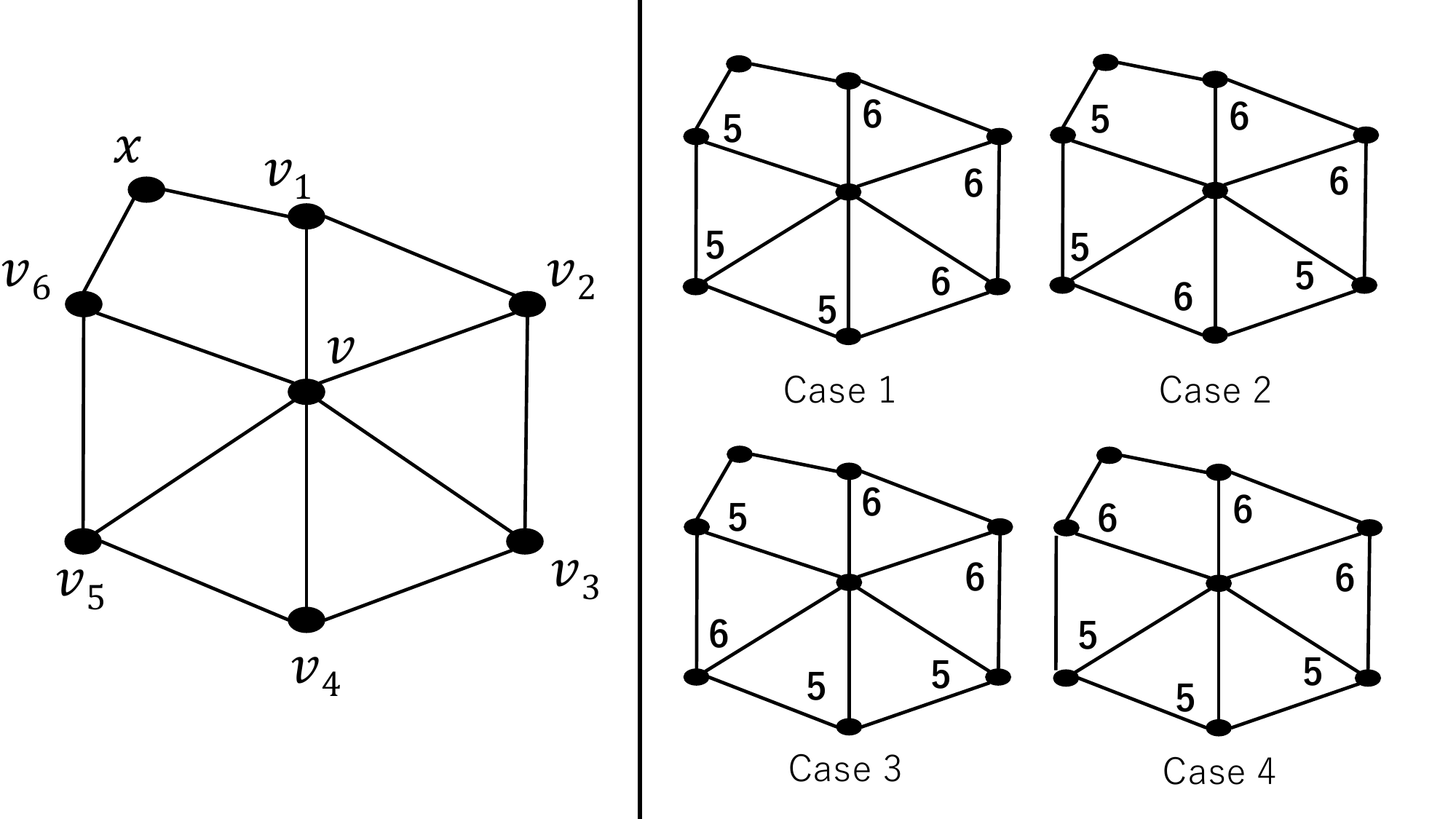}
  \end{minipage}
  \begin{minipage}[b]{0.45\linewidth}
    \centering
    \includegraphics[scale=0.24]{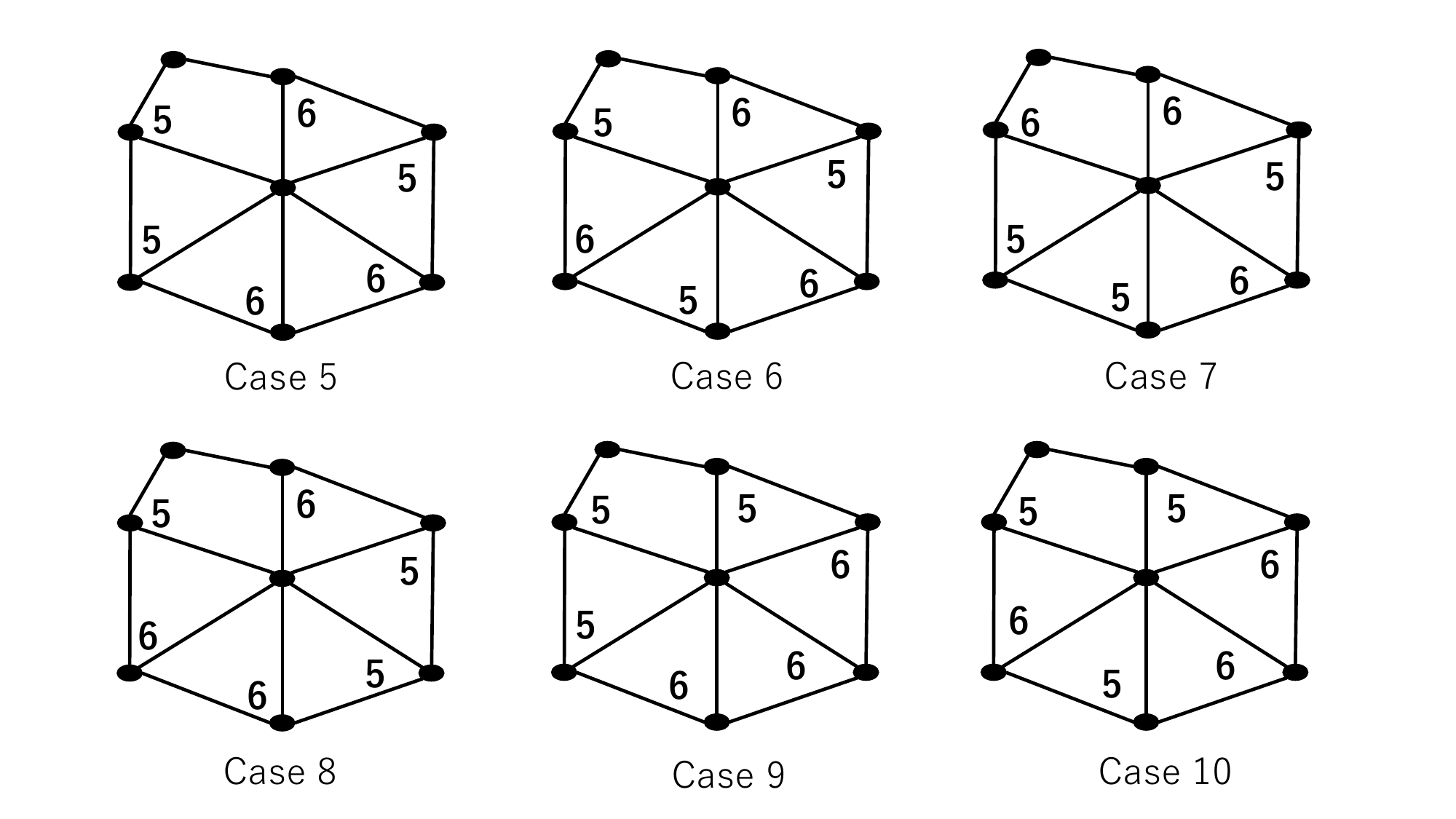}
  \end{minipage}
  \caption{Illustrations of Lemma~\ref{lem9}(4.3). }
  \label{fig1}
\end{figure}

\begin{lem}\label{lem9}
Let $v$ be a 6-vertex with $m_3(v) = 5$ and $m_4(v) = 1$ and let $u$ be any 5-vertex adjacent to $v$.
Then the following hold:
	\begin{enumerate}
	\item[(1)] $n_4(v) \leq 2$.
	\item[(2)] If $n_4(v) = 2$, then $n_5(v) \leq 1$.
	\item[(3)] If $n_4(v) = 1$, then $n_5(v) \leq 3$. 
	Moreover, if $n_5(v) = 3$, then $m_3(u) \leq 3$.
	\item[(4)] If $n_4(v) = 0$, then $n_5(v) \leq 5$.
	Moreover, the following hold:
		\begin{enumerate}
		\item[(4.1)] If $n_5(v) = 5$, then $m_3(u) \leq 3$.
		\item[(4.2)] If $n_5(v) = 4$, then there exist at most two 5-vertices $u$ with $m_3(u) \geq 4$.
		\item[(4.3)] If $n_5(v) = 3$, then there exist at most two 5-vertices $u$ with $m_3(u) \geq 4$.
		\end{enumerate}
	\end{enumerate}
\end{lem}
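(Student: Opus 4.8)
Label the neighbours so that the five $3$-faces at $v$ are $[vv_1v_2],[vv_2v_3],[vv_3v_4],[vv_4v_5],[vv_5v_6]$ and the $4$-face is $[vv_6xv_1]$. Then in $G-v$ the vertices $v_1,\dots,v_6,x$ lie on a $7$-cycle $v_1v_2v_3v_4v_5v_6x$, and the pairs of neighbours of $v$ that are at distance $>2$ in $G-v$ (the \emph{bad pairs}) are exactly $v_1v_4$, $v_1v_5$, $v_2v_5$, $v_2v_6$, $v_3v_6$, which form a path $v_4v_1v_5v_2v_6v_3$. To reach a contradiction I will, in each case, delete $v$ and add a plane set $S$ of chords of the $7$-gon such that (i) each $v_i$ receives at most $6-d(v_i)$ new edges (so a $4$-vertex admits up to three chords, a $5$-vertex up to two, a $6$-vertex only one), and (ii) $d_{G'}(v_a,v_b)\le 2$ is restored for every bad pair (adding edges never lengthens the other distances, so $G'$ is proper, planar, with $\Delta(G')\le 6$ and fewer vertices plus edges than $G$). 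By minimality $G'$ has a $2$-distance $20$-colouring, and if $d_2(v)\le 19$ a colour is free for $v$, a contradiction.

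\textbf{The count and the plain parts.} Since each $v_i$ has at most $d(v_i)-3$ neighbours outside $\{v\}\cup N(v)\cup\{x\}$ and $x$ is the only further vertex at distance $2$, one gets $d_2(v)\le 6+1+\sum_{i=1}^{6}(d(v_i)-3)=\sum_{i=1}^{6}d(v_i)-11$, with strict inequality whenever some vertex is counted twice (call this a \emph{coincidence}). Under the hypotheses of (1) ($n_4(v)\ge 3$), (2) ($n_4(v)=2$, $n_5(v)\ge 2$), (3) ($n_4(v)=1$, $n_5(v)\ge 4$) and of the first assertion of (4) ($n_4(v)=0$, $n_5(v)=6$) one checks $\sum d(v_i)\le 30$, hence $d_2(v)\le 19$. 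For the construction, if $n_4(v)\ge 1$ pick a $4$-vertex $v_i$ among $v_1,\dots,v_6$ (no $v_i$ is a $3$-vertex, by Lemma~\ref{lem2}) and take the fan at $v_i$: the chords joining $v_i$ to its non-neighbours in $\{v_1,\dots,v_6\}$ (at most three, one of four being redundant and dropped when $i\in\{1,6\}$). In $G'$ the vertex $v_i$ is then adjacent to all other $v_j$, so every bad pair is repaired; a fan is plane; and $v_i$ gains at most three edges while losing $vv_i$. This settles (1)--(3). When $n_4(v)=0$ and $n_5(v)=6$, take instead $S=\{v_1v_6,v_4v_6,v_2v_4\}$: these chords are pairwise non-crossing, repair all five bad pairs, and put at most two chords on each (necessarily $5$-)vertex.

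\textbf{The ``moreover'' statements.} These bound how many $5$-vertices $u\sim v$ can satisfy $m_3(u)\ge 4$; here $\sum d(v_i)$ equals $31,31,32,33$ in (3)-with-$n_5(v)=3$, (4.1), (4.2), (4.3), so $d_2(v)$ may a priori be $20,20,21,22$, and one must recover $1,1,2,3$ coincidences. Assume the contrary. If $u=v_i$ is a $5$-vertex adjacent to $v$ it is incident to $[vv_{i-1}v_i]$ and $[vv_iv_{i+1}]$ (modified in the obvious way when $i\in\{1,6\}$, where additionally $m_3(v_i)\le 4$ because of the $4$-face), so $m_3(v_i)\ge 4$ forces at least two of the remaining faces at $v_i$ to be triangles; tracing these triangles shows $v_i$ shares a neighbour with $v_{i-1}$ or $v_{i+1}$, or is adjacent to a non-consecutive $v_j$, i.e. each such $u$ produces a coincidence, and $m_3(u)=5$ produces two and, by Lemma~\ref{lem5}, makes $v_{i-1},v_{i+1}$ $6$-vertices. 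A short count then shows the assumed number of such $u$ forces enough distinct coincidences to bring $d_2(v)$ down to $\le 19$; combined with a plane $G'$ as above --- using Lemmas~\ref{lem2}--\ref{lem8} to restrict where the $6$-vertex neighbours of $v$ can sit and thereby make an admissible chord set available --- this contradicts the minimality of $G$.

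\textbf{Main obstacle.} The delicate point is choosing $S$ in the subcases where several neighbours of $v$ are $6$-vertices (each then admitting only one new incident edge): $S$ must be plane in the $7$-gon $v_1v_2v_3v_4v_5v_6x$ --- note the five ``long'' chords, which are the bad pairs themselves, pairwise cross unless they share a vertex, which already forces $S$ to use at most two of them and, moreover, forbids ``trapping'' a vertex behind a short chord --- while respecting every degree budget and still repairing all five bad pairs; and for the ``moreover'' parts one must also verify that the $m_3\ge 4$ hypotheses really force the required number of \emph{distinct} coincidences (rather than the same vertex being counted several times). This is precisely where the structural information about the $4$- and $5$-vertices near $v$ supplied by Lemmas~\ref{lem2}--\ref{lem8} is needed, and where the configurations of Figure~\ref{fig1} are used.
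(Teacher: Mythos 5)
Your handling of (1), (2), and the first assertions of (3) and (4) follows the paper's method (delete $v$, add a plane chord set, use $d_2(v)\le\sum_{i}d(v_i)-11\le 19$) and is essentially sound, with one repair needed: for a fan at $v_1$ the chord you must drop is $v_1v_4$ (since $v_4$ is then reached through $v_3$ or $v_5$), \emph{not} one of the pairs already at distance two --- dropping $v_1v_3$ leaves the bad pair $v_3v_6$ unrepaired, and dropping $v_1v_6$ leaves $v_2v_6$ and $v_3v_6$ unrepaired. The serious problems are in the ``moreover'' clauses, which is where the content of the lemma lies and where your argument is only a sketch that does not close.

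Two concrete failures. First, the coincidence count. In (4.3) you need three coincidences, but if two of the three bad $5$-vertices are consecutive, say $v_3$ and $v_4$, both can be certified by the \emph{same} doubled edge $v_3v_4$: the second triangle on $v_3v_4$ has a unique third vertex $w$, and the remaining extra triangle at $v_3$ (resp.\ $v_4$) can be the middle face $[v_3wc_3]$ (resp.\ $[v_4wc_4]$), which creates no further double counting toward $d_2(v)$. Together with the one coincidence forced at the third bad vertex you reach only $d_2(v)\le 33-11-2=20$. Second, the chord set. In (4.2) with the two $6$-vertices sitting at $v_1$ and $v_6$ --- a configuration none of Lemmas~\ref{lem2}--\ref{lem8} excludes --- $v_1$ must be brought within distance two of both $v_4$ and $v_5$, and $v_6$ within distance two of both $v_2$ and $v_3$, each with a budget of one new edge; every chord from $v_1$ into $\{v_4,v_5\}$ crosses every chord from $v_6$ into $\{v_2,v_3\}$ in the $7$-gon, and the workarounds (e.g.\ routing both through $v_3$ via $v_1v_3,v_3v_5,v_3v_6$) exceed a $5$-vertex's budget. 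So ``delete $v$ and add chords'' cannot be completed there, and your appeal to Lemmas~\ref{lem2}--\ref{lem8} to rule such placements out is not available. The missing idea --- which is how the paper actually proves the first subcase of (4.2) and eight of the ten configurations of (4.3) --- is to change the reducible configuration: delete a $5$-vertex $u$ adjacent to $v$ (such as $v_3$, $v_4$, or $v_6$) together with one suitable added edge, and show directly that $m_3(u)\ge 4$ would force $d_2(u)\le 19$, hence $m_3(u)\le 3$. Your proposal never considers deleting any vertex other than $v$, and without that move the hard cases of (4.2) and (4.3) remain unproved.
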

\begin{proof}
Suppose that $v$ is incident to five 3-faces $[vv_1v_2]$, $[vv_2v_3]$, $[vv_3v_4]$, $[vv_4v_5]$, $[vv_5v_6]$, and one 4-face $[vv_6xx_1]$.
By Lemma~\ref{lem2}(2), $v$ is not adjacent to any 3-vertex.

(1) In the proof of Lemma~\ref{lem3}, we showed that if $v$ is a 4-vertex with $m_3(v) = 2$, then no edge in $G[N_G(v)]$ is contained in two 3-faces.
Thus $v_2$, $v_3$, $v_4$, and $v_5$ cannot be 4-vertices.
Among the neighbours of $v$, at most two vertices, namely, $v_1$ and $v_6$, can be 4-vertices.

(2) We suppose that $v_1$ and $v_6$ are 4-vertices.
Assume that $v$ is adjacent to two 5-vertices.
Let $G' = G - v + \{v_1v_3, v_1v_5, v_1v_6\}$.
The graph $G'$ is proper with respect to $G$.
By the minimality of $G$, $G'$ has a 2-distance 20-coloring $\phi'$.
Let $\phi$ be a coloring of $G$ such that every vertex in $V(G)$, except for $v$, is colored using $\phi'$.
Since $\Delta \leq 6$, it follows that $|C_{\phi}(v)| \leq 19$ and $|C| - |C_\phi(v)| \geq 1$.
Therefore, there exists a safe color for $v$.
By coloring $v$ with the safe color, $\phi$ becomes a 2-distance 20-coloring of $G$, a contradiction.

(3) Without loss of generality, we suppose that $v_1$ is a 4-vertex.
Assume that $v$ is adjacent to four 5-vertices.
The graph $G' = G - v + \{v_1v_3, v_1v_5, v_1v_6\}$ is proper with respect to $G$.
By the minimality of $G$, $G'$ has a 2-distance 20-coloring $\phi'$.
Let $\phi$ be a coloring of $G$ such that every vertex in $V(G)$, except for $v$, is colored using $\phi'$.
Since $|C_{\phi}(v)| \leq 19$, we can color $v$ with a safe color, a contradiction.

Now, we consider the case $n_5(v)$ = 3.
To show that any 5-vertex $u$ adjacent to $v$ cannot be incident to more than four 3-faces, it suffices to prove that no edge in $G[N_G(v)]$ is contained in two 3-faces of $G$.
Assume that the edge $v_1v_2$ is contained in two 3-faces.
Let $G' = G - v + \{v_1v_3, v_1v_5, v_1v_6\}$.
The graph $G'$ is proper with respect to $G$.
Since $d_2(v) \leq 19$, there exists a safe color for $v$, a contradiction.

(4) We suppose that $v$ is not adjacent to any 4-vertex.
Assume that all neighbours of $v$ are 5-vertices.
Let $G' = G - v +  \{v_6v_1, v_3v_1, v_3v_5\}$.
The graph $G'$ is proper with respect to $G$.
By the minimality of $G$, $G'$ has a 2-distance 20-coloring $\phi'$.
Since $d_2(v) \leq 19$, there exists a safe color for $v$, a contradiction.

(4.1) Suppose that $v$ is adjacent to five 5-vertices.
To show that any 5-vertex $u$ adjacent to $v$ cannot be incident to more than four 3-faces, it suffices to prove that no edge in $G[N_G(v)]$ is contained in two 3-faces of $G$.
Assume that the edge $v_1v_2$ is contained in two 3-faces.
There are six possibilities for which neighbours of $v$ is a 6-vertex.
Due to symmetry, it suffices to consider the cases where $v_1$, $v_2$, or $v_3$ is a 6-vertex.
In each of these cases, let $G' = G - v + \{v_6v_1, v_4v_2, v_4v_6\}$.
Then $G'$ is proper with respect to $G$.
Since $d_2(v) \leq 19$, there exists a safe color for $v$, a contradiction.

(4.2) Suppose that $v$ is adjacent to four 5-vertices.
We consider two cases based on whether both $v_1$ and $v_6$ are 6-vertices or not.
First, we show that if $v_1$ and $v_6$ are both 6-vertices, then $m_3(v_3) \leq 3$ and $m_3(v_4) \leq 3$.
By symmetry, we only need to consider $v_3$.
Let $v_7$ and $v_8$ be the neighbours of $v_3$ other than $v$, $v_2$, and $v_4$.
Assume that $v_3$ is incident to four 3-faces.
We have two cases:
Case~1: The four 3-faces are $[vv_2v_3]$, $[vv_3v_4]$, $[v_2v_7v_3]$, and $[v_3v_8v_4]$. 
Case~2: The four 3-faces are $[vv_2v_3]$, $[vv_3v_4]$, $[v_2v_7v_3]$, and $[v_3v_7v_8]$.
In Case~1, let $G' = G - v_3 + \{v_7v_8\}$.
In Case~2, let $G' = G - v_3 + \{v_8v_4\}$.
In each case, $G'$ is proper with respect to $G$.
By the minimality of $G$, $G'$ has a 2-distance 20-coloring $\phi'$.
Let $\phi$ be a coloring of $G$ such that every vertex in $V(G)$, except for $v_3$, is colored using $\phi'$.
Since $|C_{\phi}(v_3)| \leq 18$, we can color $v_3$ with a safe color, a contradiction.
Therefore, if $v_1$ and $v_6$ are 6-vertices, then there are at most two 5-vertices $u$ adjacent to $v$ with $m_3(u) \geq 4$, namely $v_2$ and $v_5$.

Next, we discuss the case where $v_1$ is not a 6-vertex or $v_6$ is not a 6-vertex. 
To show that there exist at most two 5-vertices $u$ adjacent to $v$ with $m_3(u) \geq 4$, it suffices to prove that at most one edge in $G[N_G(v)]$ is contained in two 3-faces of $G$.
Assume that two edges $v_iv_{i+1}$ and $v_jv_{j+1}$ for $i, j \in \{1,2,3,4,5\}$ with $i \neq j$ are contained in two 3-faces. 
If $v_1$ and $v_2$ are 6-vertices, then we construct $G' = G - v + \{v_2v_4, v_4v_6, v_6v_1\}$.
(Otherwise, we construct $G'$ as follows: remove $v$, add the edge $v_6v_1$, and choose one 5-vertex $v_i$ in the neighbourhood of $v$ other than $v_1$ and $v_6$, and connect $v_i$ to the two vertices in the neighbourhood of $v$ that are at distance two from $v_i$.)
The graph $G'$ is proper with respect to $G$.
Since $d_2(v) \leq 19$, there exists a safe color for $v$, a contradiction.

(4.3)
Suppose that $v$ is adjacent to three 5-vertices.
There are twenty possible combinations of three 5-vertices.
However, by symmetry, we only discuss ten cases: (see Figure~\ref{fig1}.)
Case~1: The three 5-vertices are $v_4$, $v_5$, and $v_6$.
Case~2: The three 5-vertices are $v_3$, $v_5$, and $v_6$.
Case~3: The three 5-vertices are $v_3$, $v_4$, and $v_6$.
Case~4: The three 5-vertices are $v_3$, $v_4$, and $v_5$.
Case~5: The three 5-vertices are $v_2$, $v_5$, and $v_6$.
Case~6: The three 5-vertices are $v_2$, $v_4$, and $v_6$.
Case~7: The three 5-vertices are $v_2$, $v_4$, and $v_5$.
Case~8: The three 5-vertices are $v_2$, $v_3$, and $v_6$.
Case~9: The three 5-vertices are $v_1$, $v_5$, and $v_6$.
Case~10: The three 5-vertices are $v_1$, $v_4$, and $v_6$.

First, we consider Cases~3, 4, and 7.
We show that $m_3(v_4) \leq 3$ in these cases.
Assume that $v_4$ is incident to four 3-faces.
Let $v_7$ and $v_8$ be the neighbours of $v_4$ other than $v$, $v_3$, and $v_5$.
Since $v_4$ is already incident to two 3-faces, namely $[vv_3v_4]$ and $[vv_4v_5]$, the remaining two 3-faces must be one of the following:
(i) $[v_3v_7v_4]$ and $[v_4v_8v_5]$, or 
(ii) $[v_4v_7v_8]$ and $[v_4v_8v_5]$.
In case (i), we construct $G' = G - v_4 + \{v_7v_8\}$.
In case (ii), we construct $G' = G - v_4 + \{v_3v_7\}$.
The graph $G'$ is proper with respect to $G$.
By the minimality of $G$, $G'$ has a 2-distance 20-coloring $\phi'$.
Let $\phi$ be a coloring of $G$ such that every vertex in $V(G)$, except for $v_4$, is colored using $\phi'$.
Since $|C_{\phi}(v_4)| \leq 19$ in each case, we can color $v_4$ with a safe color, a contradiction.

Next, we consider Cases~1, 2, 5, and 9.
We prove that $m_3(v_6) \leq 3$ in these cases.
Assume that $v_6$ is incident to four 3-faces.
It is clear that $G' = G - v_6 + \{vx\}$ is proper with respect to $G$.
Since $d_2(v_6) \leq 19$ in each case,  there exists a safe color for $v_6$, a contradiction.

We can similarly show that $m_3(v_3) \leq 3$ in Case~8.
Assume that $v_3$ is incident to four 3-faces.
Let $v_7$ and $v_8$ be the neighbours of $v_3$ other than $v$, $v_2$, and $v_4$.
Since $v_3$ is already incident to two 3-faces, namely $[vv_2v_3]$ and $[vv_3v_4]$, the remaining two 3-faces must be one of the following:
(i) $[v_2v_7v_3]$ and $[v_3v_8v_4]$, or 
(ii) $[v_3v_7v_8]$ and $[v_3v_8v_4]$.
In case (i), we construct $G' = G - v_3 + \{v_7v_8\}$.
In case (ii), we construct $G' = G - v_3 + \{v_2v_7\}$.
The graph $G'$ is proper with respect to $G$.
Since $d_2(v_3) \leq 19$ in each case,  there exists a safe color for $v_3$, a contradiction.

Finally, we discuss Case~6 and Case~10.
To show that there exist at most two 5-vertices $u$ adjacent to $v$ with $m_3(v) \geq 4$, it suffices to prove that at most two edges in $G[N_G(v)]$ are contained in two 3-faces of $G$.
Assume that three edges in $G[N_G(v)]$ are contained in two 3-faces. 
In each case, we construct $G' = G - v + \{v_2v_4, v_4v_6, v_6v_1\}$.
The graph $G'$ is proper with respect to $G$.
Since $d_2(v) \leq 19$, there exists a safe color for $v$, a contradiction.
From the above, there are at most two 5-vertices $u$ adjacent to $v$ with $m_3(u) \geq 4$ in Case~1 through Case~10.
\end{proof}

\begin{lem}\label{lem10}
Let $v$ be a 6-vertex with $m_3(v) = 5$ and $m_{5^+}(v) = 1$ and let $u$ be any 5-vertex adjacent to $v$.
Then the following hold:
	\begin{enumerate}
	\item[(1)] $n_4(v) \leq 2$.
	\item[(2)] If $n_4(v) = 2$, then $n_5(v) \leq 2$.
	\item[(3)] If $n_4(v) = 1$, then $n_5(v) \leq 4$. 
	Moreover, if $n_5(v) = 4$, then $m_3(u) \leq 3$.
	\item[(4)] If $n_4(v) = 0$ and $n_5(v) = 6$, then $m_3(u) \leq 3$.
	\item[(5)] If $n_4(v) = 0$ and $n_5(v) = 5$, then there exist at most two 5-vertices $u$ with $m_3(u) \geq 4$.
	\end{enumerate}
\end{lem}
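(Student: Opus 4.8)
The proof runs along the lines of the earlier lemmas and is a close parallel of Lemma~\ref{lem9}; the plan is to record one vertex count and feed it into the standard reduction. Write the five $3$-faces incident to $v$ as $[vv_1v_2],\dots,[vv_5v_6]$ and let $F$ be the $5^+$-face incident to $v$, lying between $v_6$ and $v_1$; let $a_1$ (resp.\ $a_2$) be the neighbour of $v_6$ (resp.\ of $v_1$) on $\partial F$, so $a_1\ne a_2$ because $d(F)\ge 5$. By Lemma~\ref{lem2}(2) each $v_i$ is a $4^+$-vertex, so $\sum_i d(v_i)=24+n_5(v)+2n_6(v)$. Bounding the vertices within distance two of $v$---the six $v_i$; at most $d(v_1)-2$ new neighbours of $v_1$ (namely $a_2$ together with $d(v_1)-3$ others) and likewise at most $d(v_6)-2$ for $v_6$; at most $d(v_i)-3$ new neighbours of $v_i$ for $2\le i\le5$---gives
\[
d_2(v)-1\ \le\ \sum_i d(v_i)-10\ =\ 14+n_5(v)+2n_6(v),
\]
and each edge $v_iv_{i+1}$ that lies in a second $3$-face lowers the right-hand side by at least $1$, since the third vertex of that face is double-counted above. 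As always, if we can produce a graph $G'$ proper with respect to $G$ by deleting a vertex $w$ and adding edges so that $w$ sees at most $19$ colours, we reach a contradiction.

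For (1): were some $v_i$ with $2\le i\le5$ a $4$-vertex, then $m_3(v_i)=2$ by Lemma~\ref{lem3}, so by the proof of Lemma~\ref{lem3} no edge of $G[N_G(v_i)]$ lies in two $3$-faces; but the edges $vv_{i-1}$ and $vv_{i+1}$ both lie in $G[N_G(v_i)]$, and at least one of them lies in two of the $3$-faces $[vv_jv_{j+1}]$, a contradiction. Hence only $v_1,v_6$ can be $4$-vertices. For (2)--(4) we delete $v$: if there is a $4$-vertex among $v_1,v_6$, say $v_1$, set $G'=G-v+\{v_1v_3,v_1v_5,v_1v_6\}$, which makes $d(v_1)=6$; if $n_4(v)=0$ set $G'=G-v+\{v_6v_1,v_3v_1,v_3v_5\}$, with all degrees $\le 6$. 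In each case $G'$ is proper, so $|C_\phi(v)|\le d_2(v)-1$, and the displayed bound is $\le 19$ precisely when $n_5(v)+2n_6(v)$, decreased by the number of edges $v_iv_{i+1}$ lying in two $3$-faces, is at most $5$; this proves (2) and the first assertions of (3) and (4). For the remaining clauses: if a $5$-vertex $u=v_i$ adjacent to $v$ has $m_3(u)\ge 4$, then inspecting the faces around $v_i$ shows that one of the path-edges $v_1v_2,\dots,v_5v_6$ incident to $v_i$ lies in two $3$-faces, supplying the one extra unit, so $m_3(u)\le 3$.

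Part (5) is the crux: here $n_5(v)=5$, $n_6(v)=1$, the plain bound equals $21$, and two units of saving are required. By the reflection $v_i\leftrightarrow v_{7-i}$ we may assume the unique $6$-vertex is $v_1$, $v_2$, or $v_3$. If it is $v_1$ (equivalently $v_6$): for each $i\in\{3,4,5\}$ both neighbours $v_{i-1},v_{i+1}$ of $v_i$ are $5$-vertices, so by Lemma~\ref{lem5} $m_3(v_i)\le 4$, and---exactly as in Lemma~\ref{lem9}(4.2)--(4.3)---assuming $m_3(v_i)=4$ lets one delete $v_i$ and add one edge inside $N_G(v_i)$ to obtain a proper $G'$ in which $v_i$ sees at most $19$ colours; hence $m_3(v_3),m_3(v_4),m_3(v_5)\le 3$ and only $v_2,v_6$ can have $m_3\ge 4$. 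If the $6$-vertex is $v_2$ or $v_3$, then $v_1,v_6$ are $5$-vertices, and I claim at most one of $v_1v_2,\dots,v_5v_6$ lies in two $3$-faces: if two did, we would gain two units, so $d_2(v)-1\le 19$, while $G'=G-v+\{v_1v_4,v_1v_6,v_2v_5,v_3v_6\}$ is proper with all degrees $\le 6$ (here $v_1$ and $v_6$, being $5$-vertices, can each absorb two new edges, and the $6$-vertex absorbs one). Since a $5$-vertex $v_i$ adjacent to $v$ with $m_3(v_i)\ge 4$ forces a path-edge incident to it into two $3$-faces, and that edge is incident to only two of the neighbours of $v$, the claim gives at most two $5$-vertices $u$ with $m_3(u)\ge 4$.

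The main obstacle is (5). Since the discharging-free bound is exactly $21$ while $19$ is needed, every position of the $6$-vertex has to be squeezed for the two missing units---either from repeated $3$-faces or via a reduction at a neighbouring $5$-vertex---and one must also exhibit, for each case, an edge-augmentation of $G-v$ (or of $G-v_i$) that is at once proper and of maximum degree at most $6$. The delicate point, not present in Lemma~\ref{lem9}, is that the bounding face $F$ gives $v_1$ and $v_6$ no common neighbour, so after $v$ is deleted these two can be relinked only if both retain spare degree; this is why the position ``$6$-vertex $=v_1$'' must be handled by deleting a middle $5$-vertex instead. Checking that each displayed $G'$ is genuinely proper, and that the claimed savings persist through all coincidences among the second-neighbours of $v$ and of the $v_i$, is where the real computational work lies.
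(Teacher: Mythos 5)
Your parts (1)--(4) track the paper's argument closely (the paper uses the edge set $\{v_2v_4,v_4v_6,v_1v_6\}$ rather than $\{v_6v_1,v_3v_1,v_3v_5\}$ in (4), but both are valid non-crossing chord sets with the right degrees), and your explicit counting identity $d_2(v)\le 14+n_5(v)+2n_6(v)-(\text{number of doubly-covered path-edges})$ is a clean packaging of what the paper does case by case. The problem is part (5), subcase B. The graph $G'=G-v+\{v_1v_4,v_1v_6,v_2v_5,v_3v_6\}$ contains the $6$-cycle $v_1v_2v_3v_4v_5v_6v_1$ (the five path-edges come from the $3$-faces at $v$, and you add $v_1v_6$) together with all three long diagonals $v_1v_4$, $v_2v_5$, $v_3v_6$; that is exactly a $K_{3,3}$ with parts $\{v_1,v_3,v_5\}$ and $\{v_2,v_4,v_6\}$, so $G'$ is not planar. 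The minimality of $G$ is only over planar graphs with $\Delta\le 6$, so you cannot conclude that $G'$ admits a $2$-distance $20$-coloring, and subcase B collapses. Concretely: after deleting $v$ the vertices $v_1,\dots,v_6$ lie in this cyclic order on the boundary of a single face, so the added edges must form a non-crossing chord set of that hexagon; $\{v_1v_4,v_2v_5\}$ already crosses.

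The fix is the paper's: for every position of the unique $6$-vertex (by symmetry $v_1$, $v_2$, or $v_3$) take $G'=G-v+\{v_2v_4,v_4v_6,v_1v_6\}$. This is outerplanar inside the hexagon, proper (every non-adjacent pair $v_iv_j$ acquires a common neighbour), and respects the degree bound: $v_4$ and $v_6$ each lose the edge to $v$ and gain two edges (net $5\to 6$), while $v_1$ and $v_2$ gain only one edge each (net $6\to 6$ if either is the $6$-vertex). In particular your stated obstacle --- that $v_1$ and $v_6$ cannot be relinked when $v_1$ is the $6$-vertex --- does not arise, and your subcase A detour (deleting a middle $5$-vertex $v_i$ with $m_3(v_i)=4$, as in Lemma~\ref{lem9}(4.2)--(4.3)) is correct but unnecessary. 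With subcase B repaired along these lines the rest of your argument goes through.
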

\begin{proof}
Suppose that $v$ is incident to five 3-faces $[vv_1v_2]$, $[vv_2v_3]$, $[vv_3v_4]$, $[vv_4v_5]$, $[vv_5v_6]$, and one $5^+$-face that contains $v_1$ and $v_6$.
By Lemma~\ref{lem2}(2), $v$ is not adjacent to any 3-vertex.

(1) The proof is the same as that of Lemma~\ref{lem9}(1).
The vertices $v_1$ and $v_6$ can be 4-vertices.

(2) We suppose that $v_1$ and $v_6$ are 4-vertices.
Assume that $v$ is adjacent to three 5-vertices.
Regardless of which neighbours of $v$ other than $v_1$ and $v_6$ are the three 5-vertices, we construct $G' = G - v + \{v_1v_3, v_1v_5, v_1v_6\}$.
The graph $G'$ is proper with respect to $G$.
By the minimality of $G$, $G'$ has a 2-distance 20-coloring $\phi'$.
Let $\phi$ be a coloring of $G$ such that every vertex in $V(G)$, except for $v$, is colored using $\phi'$.
Since $\Delta \leq 6$, it follows that $|C_\phi(v)| \leq 19$ and $|C| - |C_\phi(v)| \geq 1$.
Therefore, there exists a safe color for $v$.
By coloring $v$ with the safe color, $\phi$ becomes a 2-distance 20-coloring of $G$, a contradiction.

(3) Without loss of generality, we suppose that $v_1$ is a 4-vertex.
Assume that $v$ is adjacent to five 5-vertices.
It is clear that $G' = G - v + \{v_1v_3, v_1v_5, v_1v_6\}$ is proper with respect to $G$.
By the minimality of $G$, $G'$ has a 2-distance 20-coloring $\phi'$.
Let $\phi$ be a coloring of $G$ such that every vertex in $V(G)$, except for $v$, is colored using $\phi'$.
Since $|C_{\phi}(v)| \leq 19$, we can color $v$ with a safe color, a contradiction.
Now, we suppose that $v$ is adjacent to four 5-vertices.
To show that any 5-vertex $u$ adjacent to $v$ cannot be incident to more than four 3-faces, it suffices to prove that no edge in $G[N_G(v)]$ is contained in two 3-faces of $G$.
Assume that the edge $v_1v_2$ is contained in two 3-faces.
Let $G' = G - v + \{v_1v_3, v_1v_5, v_1v_6\}$.
Then $G'$ is proper with respect to $G$.
Since $d_2(v) \leq 19$, there exists a safe color for $v$, a contradiction.

(4) Suppose that all neighbours of $v$ are 5-vertices.
To show that any 5-vertex $u$ adjacent to $v$ is incident to at most three 3-faces, it suffices to prove that no edge in $G[N_G(v)]$ is contained in two 3-faces of $G$.
Assume that the edge $v_1v_2$ is contained in two 3-faces.
Let $G' = G - v + \{v_4v_2, v_4v_6, v_1v_6\}$.
Then $G'$ is proper with respect to $G$.
Since $d_2(v) \leq 19$, there exists a safe color for $v$, a contradiction.

(5) Suppose that $v$ is adjacent to five 5-vertices and one 6-vertex.
To show that there exist at most two 5-vertices $u$ adjacent to $v$ with $m_3(u) \geq 4$, it suffices to prove that at most one edge in $G[N_G(v)]$ is contained in two 3-faces of $G$.
Assume that two edges $v_iv_{i+1}$ and $v_jv_{j+1}$ for $i, j \in \{1,2,3,4,5\}$ with $i \neq j$ are contained in two 3-faces.  
Due to symmetry, it suffices to consider the cases where $v_1$, $v_2$, or $v_3$ is a 6-vertex.
In each of these cases, let $G' = G - v + \{v_4v_2, v_4v_6, v_1v_6\}$.
Then $G'$ is proper with respect to $G$.
Since $d_2(v) \leq 19$, there exists a safe color for $v$, a contradiction.
\end{proof}

From Lemma~\ref{lem11} to Lemma~\ref{lem13}, let $f_i = [vv_iv_{i+1}]$ for $i \in \{1, 2, \ldots, 5\}$ and $f_6 = [vv_6v_1]$ be the 3-faces incident to $v$.

\begin{lem}\label{lem11}
Let $v$ be a 6-vertex with $m_3(v) = 4$ and $m_4(v) = 2$.
Then the following hold:
	\begin{enumerate}
	\item[(1)] $n_3(v) = 0$.
	\item[(2)] If $n_4(v) = 1$, then $n_5(v) \leq 4$.
	\item[(3)] If $n_5(v) = 6$, then $m_3(u) \leq 3$ for any 5-vertex $u$ adjacent to $v$.
	\end{enumerate}
\end{lem}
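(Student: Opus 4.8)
Throughout write $v_1,\dots ,v_6$ for the neighbours of $v$ in clockwise order. Since $m_3(v)+m_4(v)=6=d(v)$, every face of $G$ at $v$ is a triangle or a $4$-face: a triangle $[vv_iv_{i+1}]$ contributes the edge $v_iv_{i+1}$ to $G[N_G(v)]$, while a $4$-face $[vv_ix_iv_{i+1}]$ contributes a distance-$2$ vertex $x_i\notin N_G(v)$. Up to rotation and reflection the two $4$-faces of $v$ are adjacent, one triangle apart, or two triangles apart, and these are the cases the constructions below split into. In every reduction the template is the usual one: delete $v$, add some edges inside the single face vacated by $v$ so as to get a planar $G'$ with $\Delta(G')\le 6$ that is proper with respect to $G$ (i.e.\ $v_1,\dots ,v_6$ stay pairwise within distance $2$), apply minimality to obtain a colouring $\phi'$, and, since $|C_\phi(v)|\le d_2(v)-1$, finish as soon as $d_2(v)\le 20$.

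For (1): if some $v_i$ were a $3$-vertex then by Lemma~\ref{lem2}(2) it lies on no triangle, so neither face of $v$ incident with $v_i$ is a triangle; as $v$ has exactly two non-triangular faces, $v_i$ is incident with both, whence $m_4(v_i)\ge 2$, contradicting Lemma~\ref{lem2}(3). For (2): by (1) we may take $n_3(v)=0$, so if also $n_4(v)=1$ and $n_5(v)\ge 5$ then the six neighbours of $v$ are one $4$-vertex and five $5$-vertices. Each $v_j$ contributes at most $d(v_j)-3$ private second neighbours (two of its $d(v_j)-1$ non-$v$ neighbours are used by the two faces of $v$ at $v_j$, as a member of $N_G(v)$ or a quad-vertex), so the second neighbourhood of $v$ has size at most $\sum_j(d(v_j)-3)+2=11+2=13$ and $d_2(v)\le 1+6+13=20$. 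It remains to build $G'$: the $4$-vertex $w$ has room for three new edges, and I would make it a dominating vertex of $G[N_G(v)]$ when it lies between two triangles of $v$, and otherwise join it to as many neighbours as possible and add one further edge between two $5$-vertices (each of which still has two free slots) to repair the remaining long pairs; in each of the three positions of the $4$-faces these edges can be drawn disjointly inside the vacated face.

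For (3), assume $n_5(v)=6$ and that some neighbour $u=v_i$ has $m_3(u)\ge 4$. I would first establish the reduction to a statement about $G[N_G(v)]$. At least one face of $v$ at $u$ is a triangle (else $m_3(u)\le d(u)-2=3$), so $u$ has a $5$-vertex neighbour among $v_{i\pm 1}$; Lemma~\ref{lem5} then excludes $m_3(u)=5$, so $m_3(u)=4$ and $u$ has a unique non-triangular face. If that face is a $4$-face, Lemmas~\ref{lem2}(1) and~\ref{lem6} allow $u$ at most one $5$-vertex neighbour, forcing at most one face of $v$ at $u$ to be a triangle and the three remaining faces of $u$ to be triangles; if it is a $5^{+}$-face, both faces of $v$ at $u$ are triangles and exactly two of the other three faces of $u$ are triangles. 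Either way one of those triangles sits on an edge $v_{i-1}v_i$ or $v_iv_{i+1}$ of $G[N_G(v)]$, so that edge lies in two triangles of $G$. Hence it suffices to show that no edge of $G[N_G(v)]$ lies in two triangles. Suppose $v_iv_{i+1}$ lies in a second triangle $[v_iv_{i+1}z]$; then $z$ is a common neighbour of $v_i,v_{i+1}$ other than $v$, so re-running the estimate of (2) with all $d(v_j)=5$ (raw second-neighbourhood size $12+2$) the vertex $z$ is counted once instead of twice, giving $d_2(v)\le 1+6+13=20$. Deleting $v$ and adding, according to which of the three positions of the two $4$-faces occurs, a short non-crossing set of chords of the vacated octagon (no $5$-vertex receiving more than two) then produces the required $G'$ and a contradiction, so no such $u$ exists.

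The bookkeeping is routine and in all three statements lands exactly on $d_2(v)\le 20$. The genuine difficulty — and the reason for the case split on the positions of the two $4$-faces — is satisfying the three constraints on the new edges simultaneously in (2) and especially (3): they must be drawable inside the one face left after deleting $v$ (hence non-crossing), they must keep every neighbour of $v$ within distance $2$ of every other, and they must not push any vertex past degree $6$ (so in (3), where $N_G(v)$ consists entirely of $5$-vertices, at most two per vertex). I expect the tightest case to be the one where the two $4$-faces are two triangles apart, where $G[N_G(v)]$ is a pair of $3$-paths and the "far" pairs $v_1v_4$ and $v_3v_6$ have to be routed carefully around the two quad-vertices (a direct chord for both is impossible, since they would cross); this is where the real work lies.
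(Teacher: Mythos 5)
Your overall route is the paper's: part (1) is word-for-word the paper's argument (a 3-vertex would be forced onto two 4-faces, against Lemma~\ref{lem2}(3)), and parts (2) and (3) use the same reduction template of deleting $v$, adding chords inside the vacated face, and counting $d_2(v)$. Your counting is correct (and more explicit than the paper's), and your derivation in (3) that $m_3(u)\ge 4$ forces some edge of $G[N_G(v)]$ to lie in two triangles is actually argued more carefully than in the paper, which simply asserts the equivalence.

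The gap is that you never exhibit the added edge sets, and these are the entire content of the reducibility argument. For (2) your stated recipe --- a maximal star from the 4-vertex $w$ plus one repair chord --- does not work as written: e.g.\ in the case where the two 4-faces are $[vv_1xv_2]$ and $[vv_2yv_3]$ with $w=v_1$, joining $w$ to $v_2,v_3,v_4$ leaves the pair $v_2,v_5$ unrepaired, and every chord fixing it crosses $v_1v_3$ or $v_1v_4$ in the octagon $v_1xv_2yv_3v_4v_5v_6$; one must instead use a non-maximal star such as $\{v_1v_2,v_1v_3,v_1v_5,v_2v_3\}$, or abandon the star altogether as the paper does with the near-cycle $\{v_1v_2,v_2v_3,v_3v_5,v_5v_1\}$. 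For (3) you only assert that "a short non-crossing set of chords" exists in each of the three positions of the two 4-faces, and you yourself flag the two-apart case (where $v_1v_4$ and $v_3v_6$ cannot both be drawn) as unresolved; the paper resolves it with $\{v_1v_2,v_4v_5,v_1v_3,v_3v_5\}$, and uses $\{v_1v_2,v_2v_3,v_3v_5,v_5v_1\}$ and $\{v_1v_2,v_3v_4,v_3v_5,v_5v_1\}$ in the other two. So the constructions you defer do exist and your constraints (planarity inside the vacated face, properness, at most two new edges per 5-vertex) are the right ones, but until they are written down and checked the proof of (2) and (3) is incomplete, and the one concrete recipe you do give needs repair.
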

\begin{proof}
We have three cases where $v$ is incident to four 3-faces and two 4-faces:
Case~1: The 4-faces are $[vv_1xv_2]$ and $[vv_2yv_3]$, and the 3-faces are $f_3$, $f_4$, $f_5$, and $f_6$.
Case~2: The 4-faces are $[vv_1xv_2]$ and $[vv_3yv_4]$, and the 3-faces are $f_2$, $f_4$, $f_5$, and $f_6$.
Case~3: The 4-faces are $[vv_1xv_2]$ and $[vv_4yv_5]$, and the 3-faces are $f_2$, $f_3$, $f_5$, and $f_6$.

(1) By Lemma~\ref{lem2}(2), a 3-vertex is not incident to any 3-face, and by Lemma~\ref{lem2}(3), a 3-vertex is incident to at most one 4-face.
Thus $v$ is not adjacent to any 3-vertex in each case.

(2) Suppose that $v$ is adjacent to one 4-vertex.
Assume that all other neighbours of $v$ are 5-vertices.
First, we consider Case~1.
In the proof of Lemma~\ref{lem3}, we showed that if $v$ is a 4-vertex with $m_3(v) = 2$, then no edge in $G[N_G(v)]$ is contained in two 3-faces.
Thus only $v_1$, $v_2$, or $v_3$ can be a 4-vertex.
In each case, let $G' = G - v + \{v_1v_2, v_2v_3, v_3v_5, v_5v_1\}$.
Then $G'$ is proper with respect to $G$.
By the minimality of $G$, $G'$ has a 2-distance 20-coloring $\phi'$.
Let $\phi$ be a coloring of $G$ such that every vertex in $V(G)$, except for $v$, is colored using $\phi'$.
Since $|C_\phi(v)| \leq 19$ and $|C| - |C_\phi(v)| \geq 1$, there exists a safe color for $v$.
By coloring $v$ with the safe color, $\phi$ becomes a 2-distance 20-coloring of $G$, a contradiction.

Next, we consider Case~2.
For the same reason, only $v_1$, $v_2$, $v_3$, or $v_4$ can be a 4-vertex.
In each case, let $G' = G - v + \{v_1v_2, v_3v_4, v_3v_5, v_5v_1\}$.
Then $G'$ is proper with respect to $G$.
Since $d_2(v) \leq 19$, there exists a safe color for $v$, a contradiction.

Finally, we consider Case~3.
Each neighbour of $v$ can be a 4-vertex.
By symmetry, it suffices to consider the cases where $v_1$ or $v_3$ is a 4-vertex.
In each case, let $G' = G - v + \{v_1v_2, v_4v_5, v_1v_3, v_3v_5\}$.
Then $G'$ is proper with respect to $G$.
Since $d_2(v) \leq 19$, there exists a safe color for $v$, a contradiction.

(3) Suppose that all neighbours of $v$ are 5-vertices.
To show that any 5-vertex $u$ adjacent to $v$ cannot be incident to more than four 3-faces, it suffices to prove that no edge in $G[N_G(v)]$ is contained in two 3-faces of $G$.
Assume that the edge $v_6v_1$ is contained in two 3-faces.
In Case~1, let $G' = G - v + \{v_1v_2, v_2v_3, v_3v_5, v_5v_1\}$.
In Case~2, let $G' = G - v + \{v_1v_2, v_3v_4, v_3v_5, v_5v_1\}$.
In Case~3, let $G' = G - v + \{v_1v_2, v_4v_5, v_1v_3, v_3v_5\}$.
In each case, $G'$ is proper with respect to $G$.
Since $d_2(v) \leq 19$, there exists a safe color for $v$, a contradiction.
\end{proof}

\begin{lem}\label{lem12}
Let $v$ be a 6-vertex with $m_3(v) = 4$, $m_4(v) = 1$, and $m_{5^+}(v) = 1$.
Then $n_3(v) \leq 1$.
In particular, if $n_3(v) = 1$, then $n_4(v) = 0$.
\end{lem}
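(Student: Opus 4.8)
The plan is to prove the two assertions separately: the first by a short combinatorial argument, and the second by the deletion-and-extension scheme used throughout this section.

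For $n_3(v)\le 1$: list the six faces incident to $v$ cyclically as $f_1,\dots,f_6$, where $f_i$ separates the edges $vv_i$ and $vv_{i+1}$ (indices mod $6$); by hypothesis four of them are $3$-faces, one is a $4$-face and one is a $5^+$-face. Each neighbour $v_i$ is incident to exactly the two faces $f_{i-1}$ and $f_i$ that share the edge $vv_i$. By Lemma~\ref{lem2}(2) a $3$-vertex is incident to no $3$-face, so if some $v_i$ is a $3$-vertex then $f_{i-1}$ and $f_i$ are precisely the unique $4$-face and the unique $5^+$-face. Hence the $4$-face and the $5^+$-face must be consecutive around $v$, and then only the single vertex lying between them can be a $3$-vertex (if they are not consecutive, no neighbour of $v$ is a $3$-vertex). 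In either case $n_3(v)\le 1$.

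For the second assertion, suppose $n_3(v)=1$. By the above we may relabel so that $f_1=[vv_1xv_2]$ is the $4$-face, $f_2$ is the $5^+$-face (with $v_2$ and $v_3$ on its boundary), $f_3,\dots,f_6$ are the $3$-faces, and $v_2$ is the $3$-vertex; write $y_2$ for the neighbour of $v_2$ on $f_2$. By Lemma~\ref{lem2}(1) the neighbours $v,x,y_2$ of $v_2$ all have degree $6$. Assume for contradiction that some $v_j$ with $j\in\{1,3,4,5,6\}$ is a $4$-vertex; I would argue according to the position of $v_j$. If $v_j\in\{v_4,v_5,v_6\}$, then $v_j$ is incident to two $3$-faces of $v$, so $m_3(v_j)\ge 2$, and Lemma~\ref{lem3} forces $m_3(v_j)=2$, $m_4(v_j)=0$, and all four neighbours of $v_j$ --- namely $v$, the two triangle-partners of $v_j$ (which are neighbours of $v$), and one further vertex $z$ --- to be $6$-vertices. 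Deleting $v_j$ and adding only the edge $vz$ yields a proper planar graph $G'$ with $\Delta(G')\le 6$ (both $v$ and $z$ lose their edge to $v_j$ and gain one edge), and since $v$ is adjacent to both triangle-partners of $v_j$ and now to $z$, all distance-two pairs inside $N_G(v_j)$ are preserved. Counting the vertices within distance two of $v_j$ in $G$ and using the coincidences forced through $v$ and through the two $3$-faces at $v_j$ gives $|C_\phi(v_j)|\le 19$, a contradiction; by the minimality of $G$ this case cannot occur.

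The cases $v_j=v_1$ and $v_j=v_3$ are the delicate ones and, I expect, the main obstacle. Here Lemma~\ref{lem3} only gives $m_3(v_j)\le 2$. If $m_3(v_j)=2$, Lemma~\ref{lem3} again forces all neighbours of $v_j$ to be $6$-vertices and $v_j$ to carry a second $3$-face disjoint from $v$, which produces an extra common neighbour between $v_j$ and one of its neighbours; one deletes $v_j$, adds the few required edges, and repeats the count. If $m_3(v_j)=1$, then whenever $v_j$ is incident to a $4$-face --- automatic for $v_1$, since $v_1\in f_1$ --- Lemma~\ref{lem4} gives $n_4(v_j)=0$ and $n_5(v_j)\le 1$, and combining this with the fact that $v$ (and, for $j=1$, also $x$) is a $6$-vertex restricts the degrees of the remaining neighbours of $v_j$ enough to make the count go through after deleting $v_j$, or else after deleting $v$ and exploiting that $v_2$ has degree $3$. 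The genuinely awkward subcase is $v_j=v_3$ with $m_3(v_3)=1$ and $v_3$ incident to no $4$-face, where none of Lemmas~\ref{lem3}--\ref{lem4} applies directly; there one must either rule out this subconfiguration or locate an independent reducible configuration, and pushing the forbidden-colour count below $20$ in that subcase is the step I expect to be hardest.
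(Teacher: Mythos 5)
Your proof of the first assertion ($n_3(v)\le 1$) is correct and is the same argument as the paper's: a $3$-vertex neighbour of $v$ avoids all $3$-faces by Lemma~\ref{lem2}(2), so it must lie between the unique $4$-face and the unique $5^+$-face, and only one neighbour can be in that position. Your treatment of the candidates $v_4,v_5,v_6$ for a $4$-vertex is also sound, though more laborious than necessary: for each such $v_j$ one of the edges $vv_{j-1}$ or $vv_{j+1}$ lies in $G[N_G(v_j)]$ and is contained in two $3$-faces of $G$, which already contradicts the sub-claim established inside the proof of Lemma~\ref{lem3} for $4$-vertices with $m_3=2$, so no deletion argument is needed there.

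The genuine gap is exactly where you flag it: the cases $v_j=v_1$ and $v_j=v_3$. Your strategy of deleting the $4$-vertex $v_j$ (or $v$) cannot be pushed through, because a $4$-vertex with $m_3(v_j)=1$ and four $6$-vertex neighbours has $d_2(v_j)$ around $22$, far above $19$. The idea you are missing is to delete the \emph{$3$-vertex} $v_2$ instead and use the adjacent $4$-vertex only as a low-degree anchor for the added edges. Concretely, with $N_G(v_2)=\{v,x,y\}$ (where $x$ is on the $4$-face and $y=y_2$ is on the $5^+$-face), all three neighbours are $6$-vertices by Lemma~\ref{lem2}(1), and $v_1$ is a common neighbour of $v$ and $x$, so $d_2(v_2)\le 3+3\cdot 5-1=17$. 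The only obstacle to deleting $v_2$ is restoring the distance-two relations among $\{v,x,y\}$ without exceeding degree $6$, and this is precisely what the hypothetical $4$-vertex provides: if $v_1$ is a $4$-vertex, set $G'=G-v_2+\{v_1y\}$ (the pairs $\{v,x\}$, $\{v,y\}$, $\{x,y\}$ all go through $v_1$, whose degree rises only to $5$); if $v_3$ is a $4$-vertex, set $G'=G-v_2+\{v_3x,v_3y\}$ (its degree rises to $6$). In either case $G'$ is proper, $|C_\phi(v_2)|\le 17$, and $v_2$ can be recoloured, a contradiction. This disposes of all the subcases you labelled delicate, including $v_3$ with $m_3(v_3)=1$ and no incident $4$-face, without any further case analysis.
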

\begin{proof}
We have three cases where $v$ is incident to four 3-faces, one 4-face, and one $5^+$-face:
Case~1: The 4-face is $[vv_1xv_2]$ and the $5^+$-face is $[vv_2y \ldots zv_3]$, and the 3-faces are $f_3$, $f_4$, $f_5$, and $f_6$.
Case~2: The 4-face is $[vv_1xv_2]$ and the $5^+$-face is $[vv_3y \ldots zv_4]$, and the 3-faces are $f_2$, $f_4$, $f_5$, and $f_6$.
Case~3: The 4-face is $[vv_1xv_2]$ and the $5^+$-face is $[vv_4y \ldots zv_5]$, and the 3-faces are $f_2$, $f_3$, $f_5$, and $f_6$.

First, we show that $v$ is adjacent to at most one 3-vertex.
By Lemma~\ref{lem2}(2), a 3-vertex is not incident to any 3-face.
Thus $v$ is not adjacent to any 3-vertex in Case~2 and Case~3.
In Case~1, only $v_2$ can be a 3-vertex. 
Thus $n_3(v) \leq 1$ holds.

Now we consider Case~1 and suppose that $v_2$ is a 3-vertex.
Assume that $v$ is adjacent to a 4-vertex.
In the proof of Lemma~\ref{lem3}, we showed that if $v$ is a 4-vertex with $m_3(v) = 2$, then no edge in $G[N_G(v)]$ is contained in two 3-faces.
Hence only $v_1$ or $v_3$ can be a 4-vertex.
If $v_1$ is a 4-vertex, then we construct $G' = G - v_2 + \{v_1y\}$.
Otherwise, we construct $G' = G - v_2 + \{v_3x, v_3y\}$.
In both cases, $G'$ is proper with respect to $G$.
By the minimality of $G$, $G'$ has a 2-distance 20-coloring $\phi'$.
Let $\phi$ be a coloring of $G$ such that every vertex in $V(G)$, except for $v_2$, is colored using $\phi'$.
Since $\Delta \leq 6$, it follows that $|C_\phi(v_2)| \leq 17$ and $|C| - |C_\phi(v_2)| \geq 3$.
Therefore, there exists a safe color for $v_2$.
By coloring $v_2$ with the safe color, $\phi$ becomes a 2-distance 20-coloring of $G$, a contradiction.
\end{proof}

\begin{lem}\label{lem13}
Let $v$ be a 6-vertex with $m_3(v) = 4$ and $m_{5^+}(v) = 2$.
Then $n_3(v) \leq 1$.
In particular, if $n_3(v) = 1$, then $n_4(v) = 0$.
\end{lem}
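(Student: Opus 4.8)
The plan is to follow the two-step template of the proof of Lemma~\ref{lem12}. First I note that, since $v$ is a $6$-vertex with $m_3(v)+m_{5^+}(v)=6$, necessarily $m_4(v)=0$, so every face incident to $v$ is a $3$-face or a $5^+$-face. To obtain $n_3(v)\le 1$, I would argue that if $v_k$ is a $3$-vertex then $m_3(v_k)=0$ by Lemma~\ref{lem2}(2), so the two faces of $v$ incident to $v_k$ — namely $f_{k-1}$ and $f_k$ — are not $3$-faces and hence must be exactly the two $5^+$-faces of $v$; these then share the edge $vv_k$, which pins down $k$ uniquely, so $v$ has at most one $3$-vertex neighbour.

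For the second assertion I would assume $n_3(v)=1$ and suppose, toward a contradiction, that some neighbour $v_j$ of $v$ is a $4$-vertex. After a cyclic relabelling $v_1$ is the $3$-vertex, so by the previous step the two $5^+$-faces are $f_6$ and $f_1$, while $f_2=[vv_2v_3]$, $f_3=[vv_3v_4]$, $f_4=[vv_4v_5]$, $f_5=[vv_5v_6]$ are the four $3$-faces, and $j\in\{2,3,4,5,6\}$. If $j\in\{3,4,5\}$, then $v_j$ is incident to the two consecutive $3$-faces $f_{j-1}$ and $f_j$, so $m_3(v_j)=2$ by Lemma~\ref{lem3}; I would then exhibit the edge of $G[N_G(v_j)]$ opposite $v_j$ in one of these two $3$-faces — namely $vv_4$ when $j\in\{3,5\}$ and $vv_3$ when $j=4$ — and observe that it also lies on a second $3$-face of $G$ (again one of $f_2,\dots,f_5$), contradicting the statement, proved inside the proof of Lemma~\ref{lem3}, that a $4$-vertex incident to two $3$-faces has no edge of $G[N_G(\cdot)]$ contained in two $3$-faces of $G$.

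The remaining case is $j\in\{2,6\}$, and by symmetry I would treat $j=2$. Here the idea is to delete the $3$-vertex $v_1$ and reconnect its neighbourhood through the $4$-vertex $v_2$. By Lemma~\ref{lem2}(1) the two neighbours of $v_1$ other than $v$ are $6$-vertices, say $a$ (the one on face $f_6$) and $b$ (the one on face $f_1$); since $v_2$ is a $4$-vertex it is neither $a$ nor $b$, so $v_2\not\sim v_1$. I would set $G'=G-v_1+\{v_2a,v_2b\}$, adding only those of the two edges not already present. Then $G'$ is planar — deleting $v_1$ merges the three faces incident to $v_1$ (two of which are $f_6$ and $f_1$) into a single face whose boundary carries $v_2$, $a$ and $b$, inside which both chords can be drawn — it has $\Delta(G')\le 6$ (in $G-v_1$ one has $d(v_2)=4$ and $d(a),d(b)\le 5$, so the new edges keep all degrees at most $6$), it satisfies $|V(G')|+|E(G')|<|V(G)|+|E(G)|$, and it is proper with respect to $G$ (the only pairs whose distance could increase lie in $N_G(v_1)=\{v,a,b\}$, and $d_{G'}(v,a)$, $d_{G'}(v,b)$, $d_{G'}(a,b)$ are all at most $2$ through $v_2$). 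By the minimality of $G$, $G'$ has a $2$-distance $20$-coloring $\phi'$; since all three neighbours of $v_1$ are $6$-vertices we have $d_2(v_1)\le 1+3+3\cdot 5=19$, so a safe colour for $v_1$ survives and $\phi'$ extends to a $2$-distance $20$-coloring of $G$, a contradiction.

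I expect the only real care to lie in two bookkeeping points rather than in any new idea: making sure the edge exhibited in the case $j\in\{3,4,5\}$ is genuinely the edge opposite $v_j$ in a $3$-face incident to $v_j$, so that the sub-statement borrowed from the proof of Lemma~\ref{lem3} applies verbatim; and verifying that in the case $j\in\{2,6\}$ the two new edges can indeed be placed in the merged face, keeping $G'$ planar.
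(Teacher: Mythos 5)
Your proposal is correct and follows essentially the same route as the paper: the bound $n_3(v)\le 1$ comes from the fact that a 3-vertex neighbour forces both of its incident faces at $v$ to be the two $5^+$-faces, the positions incident to two 3-faces are excluded as 4-vertices via the sub-claim inside the proof of Lemma~\ref{lem3}, and for the remaining positions one deletes the 3-vertex and joins the adjacent 4-vertex to the other two (6-vertex) neighbours of the deleted vertex, exactly as in the paper's construction $G'=G-v_2+\{v_1y,v_1z\}$. Your extra checks on planarity and degrees are fine (and your count $d_2\le 19$ is slightly looser than the paper's $18$, but still suffices).
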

\begin{proof}
We have three cases where $v$ is incident to four 3-faces and two $5^+$-faces:
Case~1: The $5^+$-faces are $[vv_1x \ldots yv_2]$, $[vv_2z \ldots wv_3]$, and the 3-faces are $f_3$, $f_4$, $f_5$, and $f_6$.
Case~2: The $5^+$-faces are $[vv_1x \ldots yv_2]$, $[vv_3z \ldots wv_4]$, and the 3-faces are $f_2$, $f_4$, $f_5$, and $f_6$.
Case~3: The $5^+$-faces are $[vv_1x \ldots yv_2]$, $[vv_4z \ldots wv_5]$, and the 3-faces are $f_2$, $f_3$, $f_5$, and $f_6$.

The proof is similar to that of Lemma~\ref{lem12}.
Only $v_2$ can be a 3-vertex in Case~1.
Thus $n_3(v) \leq 1$ holds.
We suppose that $v_2$ is a 3-vertex and assume that $v$ is adjacent to a 4-vertex.
Only $v_1$ or $v_3$ can be a 4-vertex.
If $v_1$ is a 4-vertex, then we construct $G' = G - v_2 + \{v_1y, v_1z\}$.
Otherwise, we construct $G' = G - v_2 + \{v_3y, v_3z\}$.
In both cases, $G'$ is proper with respect to $G$.
By the minimality of $G$, $G'$ has a 2-distance 20-coloring $\phi'$.
Let $\phi$ be a coloring of $G$ such that every vertex in $V(G)$, except for $v_2$, is colored using $\phi'$.
Since $|C_{\phi}(v_2)| \leq 18$, we can color $v_2$ with a safe color, a contradiction.
\end{proof}

\begin{lem}\label{lem14}
Let $f$ be a 5-face of $G$.
Then there is at most one 3-vertex incident to $f$. 
In particular, if $f$ is incident to one 3-vertex, then $f$ is not incident to any 4-vertex.
\end{lem}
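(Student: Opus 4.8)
The plan is to prove the unified claim that yields both assertions at once: \emph{if a $5$-face $f=[v_1v_2v_3v_4v_5]$ is incident to a $3$-vertex, say $v_1$, then every other vertex of $f$ is a $5^+$-vertex.} The two neighbours of $v_1$ on $f$, namely $v_2$ and $v_5$, are $6$-vertices by Lemma~\ref{lem2}(1), so it only remains to exclude that one of the two non-neighbours $v_3,v_4$ of $v_1$ on $f$ is a $4^-$-vertex; relabeling the boundary of $f$, assume it is $v_3$. By Lemma~\ref{lem1}, $d_G(v_3)\in\{3,4\}$, and by Lemma~\ref{lem2}(1) applied to $v_1$ (a $4^-$-vertex being a $5^-$-vertex) we have $v_1v_3\notin E(G)$; thus $v_3$ is a non-neighbour of $v_1$, while it is adjacent to $v_2$, an edge of $f$.

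Next I would run the usual reduction. Write $N_G(v_1)=\{v_2,v_5,a\}$, so that $a\notin\{v_2,v_3,v_5\}$ (possibly $a=v_4$), and note $v_2,v_5,a$ are $6$-vertices by Lemma~\ref{lem2}(1). Delete $v_1$ from $G$. The only distance-$\leq 2$ relations among vertices of $V(G)\setminus\{v_1\}$ that this can destroy are those between two neighbours of $v_1$, since any shortest path of length $\leq 2$ through $v_1$ joins two of its neighbours; so it suffices to keep the pairs $\{v_2,v_5\}$, $\{v_2,a\}$, $\{v_5,a\}$ at distance $\leq 2$. The crucial point is to use $v_3$ as a single hub for all three: since $v_3$ is already adjacent to $v_2$ and $d_G(v_3)\leq 4$, I would let $G'$ be obtained from $G-v_1$ by adding the (at most two) edges from $v_3$ to whichever of $v_5,a$ it is not already adjacent to, drawn in the region freed by deleting $v_1$ so that $G'$ remains planar. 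Then $d_{G'}(v_3)\leq d_G(v_3)+2\leq 6$, while $v_2,v_5,a$ each lose their edge to $v_1$ before possibly regaining one to $v_3$, so $\Delta(G')\leq 6$; moreover $|V(G')|+|E(G')|<|V(G)|+|E(G)|$, and $G'$ is proper with respect to $G$ because in $G'$ the vertices $v_2,v_5,a$ are pairwise at distance $2$ via $v_3$.

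By the minimality of $G$, $G'$ admits a 2-distance 20-coloring $\phi'$; let $\phi$ be the coloring of $G$ that agrees with $\phi'$ on $V(G)\setminus\{v_1\}$. Since $v_1$ has exactly three neighbours, each of degree at most $6$, we get $d_2(v_1)\leq 3+3\cdot 5=18$, hence $|C_\phi(v_1)|\leq 18$ and $|C|-|C_\phi(v_1)|\geq 2$; coloring $v_1$ with a safe color extends $\phi$ to a 2-distance 20-coloring of $G$, a contradiction. This proves the unified claim, hence the lemma. The degenerate coincidences ($a=v_4$, or $v_3$ already adjacent to $v_5$ and/or to $a$) fit the same scheme: one adds fewer edges — at most one, or none — between $v_3$ and $N_G(v_1)$, and the estimate $d_2(v_1)\leq 18$ is unaffected.

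The step I expect to be the real obstacle is the construction of $G'$. A direct deletion of $v_1$ with its neighbourhood reconnected by two edges at one vertex fails, because Lemma~\ref{lem2}(1) forces all three neighbours of $v_1$ to be $6$-vertices, none of which can absorb two new edges without exceeding $\Delta=6$, and $v_1$ has no low-degree neighbour to reroute through. What rescues the argument is that the very configuration we wish to forbid provides the missing ingredient: a $4^-$-vertex $v_3$ on $f$, not adjacent to $v_1$ but adjacent to one of its neighbours, with spare degree at least two to act as the common hub. Checking $\Delta(G')\leq 6$, planarity, and properness of $G'$ here and in the degenerate subcases is the only remaining routine verification.
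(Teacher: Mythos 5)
Your proposal is correct and follows essentially the same route as the paper: the paper also deletes the $3$-vertex and reroutes its three ($6$-vertex) neighbours through the other $4^-$-vertex of the face (its $v_4$, your $v_3$ — the same up to reflecting the face), adding two edges to that hub so its degree stays at most $6$, and then colors the deleted vertex using $|C_\phi(v_1)|\leq 18$. The only cosmetic difference is that you unify the "second $3$-vertex" and "$4$-vertex" cases into one argument, whereas the paper treats them separately with the identical construction.
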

\begin{proof}
Let $[v_1v_2v_3v_4v_5]$ be a 5-face.
By Lemma~\ref{lem2}(1), a 3-vertex is not adjacent to any $5^-$-vertex. 
Assume that $v_1$ and $v_4$ are 3-vertices with $N_G(v_1) = \{v_2, v_5, v_6\}$ and $N_G(v_4) = \{v_3, v_5, v_7\}$.
Let $G' = G - v_1 + \{v_2v_4, v_4v_6\}$.
The graph $G'$ is proper with respect to $G$.
By the minimality of $G$, $G'$ has a 2-distance 20-coloring $\phi'$.
Let $\phi$ be a coloring of $G$ such that every vertex in $V(G)$, except for $v_1$, is colored using $\phi'$.
Since $\Delta \leq 6$, it follows that $|C_\phi(v_1)| \leq 18$ and $|C| - |C_\phi(v_1)| \geq 2$.
Therefore, there exists a safe color for $v_1$.
By coloring $v_1$ with the safe color, $\phi$ becomes a 2-distance 20-coloring of $G$, a contradiction.

Now, suppose that $v_1$ is a 3-vertex with $N_G(v_1) = \{v_2, v_5, v_6\}$.
Assume that $v_4$ is a 4-vertex.
It is clear that $G' = G - v_1 + \{v_2v_4, v_4v_6\}$ is proper with respect to $G$.
Let $\phi$ be a coloring of $G$ such that every vertex in $V(G)$, except for $v_1$, is colored using $\phi'$.
Since $|C_\phi(v_1)| \leq 18$, we can color $v_1$ with a safe color, a contradiction.
\end{proof}

We obtain the following corollary from Lemma~\ref{lem2}. 
\begin{cor}\label{cor15}
A $6^+$-face $f$ is incident to at most $\lfloor \frac{d(f)}{2} \rfloor$ 3-vertices.
\end{cor}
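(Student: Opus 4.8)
The plan is to read off everything from Lemma~\ref{lem2}(1): a 3-vertex has no $5^{-}$-vertex among its neighbours, so in particular two 3-vertices are never adjacent in $G$. The corollary then becomes a purely combinatorial statement about the boundary walk of $f$.

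First I would fix the closed boundary walk $u_1u_2\cdots u_{d(f)}u_1$ of $f$, with indices read cyclically modulo $d(f)$, so that each edge incident to $f$ occurs exactly once along the walk and each vertex incident to $f$ occurs at least once among the $u_i$. Any two cyclically consecutive entries $u_i,u_{i+1}$ are joined by an edge of $G$; since $G$ is simple we have $u_i\neq u_{i+1}$, and by Lemma~\ref{lem2}(1) they cannot both be 3-vertices (a 3-vertex adjacent to a 3-vertex would have a $5^{-}$-neighbour). Hence, in the cyclic sequence $u_1,\dots,u_{d(f)}$, no two consecutive positions are both occupied by 3-vertices.

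Next I would bound the number of positions $i\in\{1,\dots,d(f)\}$ with $u_i$ a 3-vertex. This is the classical fact that a set of positions in a cyclic sequence of length $d(f)$ with no two consecutive has size at most $\lfloor d(f)/2\rfloor$ (equivalently, an independent set in $C_{d(f)}$ has size at most $\lfloor d(f)/2\rfloor$). Since every 3-vertex incident to $f$ occupies at least one such position, the number of distinct 3-vertices incident to $f$ is at most $\lfloor d(f)/2\rfloor$, as claimed.

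I do not expect a real obstacle here; the only point needing a moment's care is that the boundary of $f$ need not be a simple cycle (if $G$ has cut-vertices, a vertex may appear several times on the walk). This is handled automatically by counting positions on the walk rather than vertices: repeated occurrences of a 3-vertex can only decrease the number of distinct 3-vertices. The hypothesis that $f$ is a $6^{+}$-face plays no role in the argument itself — it simply records the regime in which the corollary is later used, since for $3$-, $4$-, and $5$-faces sharper information is already available from Lemmas~\ref{lem2} and~\ref{lem14}.
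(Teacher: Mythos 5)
Your argument is correct and is exactly the one the paper intends: the corollary is stated as an immediate consequence of Lemma~\ref{lem2}(1), namely that no two 3-vertices are adjacent, so the 3-vertices form an independent set along the boundary walk of $f$ and hence occupy at most $\lfloor d(f)/2\rfloor$ of its $d(f)$ positions. Your extra care about non-simple boundary walks is a sound (if unstated in the paper) refinement of the same idea.
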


\section{Discharging}
In this section, we design discharging rules and complete the proof of Theorem~\ref{main}.
We can derive the following equation by Euler's formula $|V(G)| - |E(G)| + |F(G)| = 2$.
$$\sum_{v \in V(G)}(d_G(v)-4) +\sum_{f \in F(G)}(d(f) - 4) = -8.$$

We assign an initial charge $\mu (v) = d_G(v)-4$ to each vertex and $\mu(f) = d(f)-4$ to each face.
We design appropriate discharging rules and redistribute the charges of the vertices and faces according to those rules.
Let $\mu'(v)$ and $\mu'(f)$ denote the final charges of the vertices and faces, respectively, after the discharging process.
During the process, the sum of charges remains constant.
If $\mu'(v) \geq 0$ and $\mu'(f) \geq 0$, the following contradiction arises.
$$0 \leq \sum_{x \in V(G) \cup F(G)}\mu'(x) = \sum_{x \in V(G) \cup F(G)}\mu(x) = -8 < 0.$$
We design the following discharging rules, which are based on the rules in  \cite{deniz20232distance}.
\begin{enumerate}
\item[R1] Every 3-face receives $\frac{1}{3}$ from each of its incident vertices.
\item[R2] Every 3-vertex receives $\frac{1}{9}$ from each of its adjacent 6-vertices $w$ with $m_3(w) \leq 5$.
\item[R3] Every 3-vertex receives $\frac{1}{3}$ from each of its incident $5^+$-faces.
\item[R4] Every 4-vertex receives $\frac{1}{5}$ from each of its incident $5^+$-faces.
\item[R5] Every 4-vertex receives $\frac{1}{15}$ from each of its adjacent 6-vertices $w$ with $m_3(w) \leq 5$.
\item[R6] Every 5-vertex receives $\frac{1}{5}$ from each of its incident $5^+$-faces.
\item[R7] Every 5-vertex $u$ with $m_3(u) \geq 4$ receives $\frac{2}{15}$ from each of its adjacent 6-vertices $w$ with $m_3(w) \leq 5$.
\item[R8] Every 6-vertex $v$ receives $\frac{1}{5}$ from each of its incident $5^+$-faces $f$, if $f$ does not contain any 3-vertex adjacent to $v$.
\item[R9] Every 6-vertex $v$ receives $\frac{1}{9}$ from each of its incident $5^+$-faces $f$, if $f$ contains a 3-vertex adjacent to $v$.
\end{enumerate}

First, we prove that $\mu'(f) \geq 0$ for each $f \in F(G)$.
\begin{description}
\item [Case 1. $d(f) = 3.$]\mbox{}\\
The initial charge is $\mu(f) = d(f) - 4 = -1$.
By Lemma~\ref{lem2}(2), $f$ is not incident to any 3-vertex.
By R1, $f$ receives $\frac{1}{3}$ from each $4^+$-vertex incident to $f$.
Thus $\mu'(f) = -1 + 3 \times \frac{1}{3} = 0$.
\item [Case 2. $d(f) = 4.$]\mbox{}\\
By the discharging rules, there is no transfer of charge.
Thus $\mu(f) = \mu'(f) = 0$.
\item [Case 3. $d(f) = 5.$]\mbox{}\\
The initial charge is $\mu(f) = d(f) - 4 = 1$.
By Lemma~\ref{lem14}, $f$ is incident to at most one 3-vertex, and if $f$ is incident to one 3-vertex, then $f$ is not incident to any 4-vertex.
By Lemma~\ref{lem2}(1), all neighbours of a 3-vertex are 6-vertices.
If $f$ is incident to a 3-vertex, then $\mu'(f) = 1 - \frac{1}{3} - 2 \times \frac{1}{5} - 2 \times \frac{1}{9} = \frac{2}{45}$ by R3, R6, R8, and R9.
Otherwise, $\mu'(f) = 1 - 5 \times \frac{1}{5} = 0$ by R4, R6, and R8.
\item [Case 4. $d(f) = 6^+.$]\mbox{}\\
The initial charge is $\mu(f) = d(f) - 4 \geq 2$. 
We have $\mu'(f) \geq 0$ by By Corollary~\ref{cor15}.
\end{description}

Next, we prove that $\mu'(v) \geq 0$ for each $v \in V(G)$.
By Lemma~\ref{lem1} and $\Delta \leq 6$, we only consider the cases where $3 \leq d_G(v) \leq 6$.
\begin{description}
\item [Case 1. $d_G(v) = 3.$]\mbox{}\\
The initial charge is $\mu(v) = d_G(v) - 4 = -1$.
By Lemma~\ref{lem2}(1), all neighbours of $v$ are 6-vertices.
By Lemma~\ref{lem2}(2) and Lemma~\ref{lem2}(3), $v$ is incident to either one 4-face and two $5^+$-faces or three $5^+$-faces.
In each case, $m_3(w) \leq 5$ holds for any 6-vertex $w$ adjacent to $v$.
If $v$ is incident to one 4-face and two $5^+$-faces, then $\mu'(v) = -1 + 3 \times \frac{1}{9} + 2 \times \frac{1}{3} = 0$ by R2 and R3. 
Otherwise, $\mu'(v) = -1 + 3 \times \frac{1}{9} + 3 \times \frac{1}{3} = \frac{1}{3}$ by R2 and R3. 

\item [Case 2. $d_G(v) = 4.$]\mbox{}\\
The initial charge is $\mu(v) = d_G(v) - 4 = 0$.
By Lemma~\ref{lem2}(1), $v$ is not adjacent to any 3-vertex.
By Lemma~\ref{lem3}, we have $m_3(v) \leq 2$.
Thus we divide the case based on the value of $m_3(v)$.

\item [Case 2.1. $m_3(v) = 2.$]\mbox{}\\
By Lemma~\ref{lem3}, we have $m_4(v) = 0$, $n_6(v) = 4$, and $m_3(w) \leq 4$ for any 6-vertex $w$ adjacent to $v$.
This implies that $v$ is incident to two 3-faces and two $5^+$-faces, all neighbours of $v$ are 6-vertices, and R5 can be applied to $v$.
By R1, R4, and R5, $\mu'(v) = 0 - 2 \times \frac{1}{3} + 2 \times \frac{1}{5} + 4 \times \frac{1}{15} = 0$.

\item [Case 2.2. $m_3(v) = 1.$]\mbox{}\\
By Lemma~\ref{lem4}, we have $m_4(v) \leq 2$.
In particular, if $1 \leq m_4(v) \leq 2$, then $n_4(v) = 0$ and $n_5(v) \leq 1$.
If $m_4(v) = 2$, then the remaining face incident to $v$ is a $5^+$-face, and $v$ is adjacent to at least three 6-vertices.
By R1, R4, and R5, $\mu'(v) \geq 0 - 1 \times \frac{1}{3} + 1 \times \frac{1}{5} + 3 \times \frac{1}{15} = \frac{1}{15}$.
If $m_4(v) = 1$, then the remaining two faces incident to $v$ are $5^+$-faces, and $v$ is adjacent to at least three 6-vertices.
By R1, R4, and R5, $\mu'(v) \geq 0 - 1 \times \frac{1}{3} + 2 \times \frac{1}{5} + 3 \times \frac{1}{15} = \frac{4}{15}$.
If $m_4(v) = 0$, then the remaining three faces incident to $v$ are $5^+$-faces.
Regardless of the number of 6-vertices adjacent to $v$, we have $\mu'(v) \geq 0 - 1 \times \frac{1}{3} + 3 \times \frac{1}{5} = \frac{4}{15}$ by R1 and R4.

\item [Case 2.3. $m_3(v) = 0.$]\mbox{}\\
In this case, $v$ is not incident to any 3-face, which implies that R1 cannot be applied.
Thus $\mu'(v) \geq \mu(v) = 0$.

\item [Case 3. $d_G(v) = 5.$]\mbox{}\\
The initial charge is $\mu(v) = d_G(v) - 4 = 1$.
By Lemma~\ref{lem2}(1), $v$ is not adjacent to any 3-vertex.
We divide the case based on the value of $m_3(v)$.

\item [Case 3.1. $m_3(v) = 5.$]\mbox{}\\
By Lemma~\ref{lem5}, we have $n_6(v) = 5$ and $m_3(w) \leq 4$ for any 6-vertex $w$ adjacent to $v$.
Thus R7 can be applied to $v$.
By R1 and R7, $\mu'(v) = 1 - 5 \times \frac{1}{3} + 5 \times \frac{2}{15} = 0$.

\item [Case 3.2. $m_3(v) = 4.$]\mbox{}\\
The remaining face incident to $v$ is either one 4-face or one $5^+$-face.
First, we consider the case where the remaining face is a 4-face.
By Lemma~\ref{lem6}, we have $n_{4^-}(v) = 0$, $n_5(v) \leq 1$, and $m_3(w) \leq 5$ for any 6-vertex $w$ adjacent to $v$.
Thus R7 can be applied to $v$.
If $v$ is adjacent to one 5-vertex and four 6-vertices, then $\mu'(v) \geq 1 - 4 \times \frac{1}{3} + 4 \times \frac{2}{15} = \frac{1}{5}$ by R1 and R7.
If $v$ is adjacent to five 6-vertices, then $\mu'(v) \geq 1 - 4 \times \frac{1}{3} + 5 \times \frac{2}{15} = \frac{1}{3}$ by R1 and R7.
Next, we consider the case where the remaining face is a $5^+$-face.
By Lemma~\ref{lem8}, the pattern of the degrees of the vertices adjacent to $v$ must be one of the cases (a) through (d). 
In each case, we show that $\mu'(v) \geq 0$.
	\begin{description}
	\item[(a).] ($n_4(v)$, $n_5(v)$, $n_6(v)$) = (1, 0, 4).\mbox{}\\
	By Lemma~\ref{lem8}(1), we have $m_3(w) \leq 4$ for any 6-vertex $w$ adjacent to $v$.
	Thus R7 can be applied to $v$.
	By R1, R6, and R7, $\mu'(v) = 1 - 4 \times \frac{1}{3} + 1 \times \frac{1}{5} + 4 \times \frac{2}{15} = \frac{2}{5}$.
	
	\item[(b).] ($n_4(v)$, $n_5(v)$, $n_6(v)$) = (0, 2, 3).\mbox{}\\
	By Lemma~\ref{lem8}(2), we have $m_3(w) \leq 4$ for any 6-vertex $w$ adjacent to $v$.
	Thus R7 can be applied to $v$.
	By R1, R6, and R7, $\mu'(v) = 1 - 4 \times \frac{1}{3} + 1 \times \frac{1}{5} + 3 \times \frac{2}{15}  = \frac{4}{15}$.
	
	\item[(c).] ($n_4(v)$, $n_5(v)$, $n_6(v)$) = (0, 1, 4).\mbox{}\\
	By Lemma~\ref{lem8}(3), there exists at least one 6-vertex $w$ adjacent to $v$ with $m_3(w) \leq 5$.
	Thus $v$ receives at least $\frac{2}{15}$ from such a 6-vertex by R7.
	By R1, R6, and R7, $\mu'(v) \geq 1 - 4 \times \frac{1}{3} + 1 \times \frac{1}{5} + 1 \times \frac{2}{15} = 0$.
	
	\item[(d).] ($n_4(v)$, $n_5(v)$, $n_6(v)$) = (0, 0, 5).\mbox{}\\
	By Lemma~\ref{lem8}(4), there exist at least two 6-vertices $w_1$, $w_2$ adjacent to $v$ with $m_3(w_1) \leq 5$ and $m_3(w_2) \leq 5$.
	Thus $v$ receives at least $2 \times \frac{2}{15}$ from such 6-vertices by R7.
	By R1, R6, and R7, $\mu'(v) \geq 1 - 4 \times \frac{1}{3} + 1 \times \frac{1}{5} + 2 \times \frac{2}{15} = \frac{2}{15}$.
	\end{description}
	
\item [Case 3.3. $m_3(v) \leq 3.$]\mbox{}\\
The only rule by which $v$ loses charge is R1.
By R1 and $m_3(v) \leq 3$, we have $\mu'(v) \geq 1 - 3 \times \frac{1}{3} = 0$.

\item [Case 4. $d_G(v) = 6.$]\mbox{}\\
The initial charge is $\mu(v) = d_G(v) - 4 = 2$.
We divide the case based on the value of $m_3(v)$.

\item [Case 4.1. $m_3(v) = 6.$]\mbox{}\\
Since $m_3(v) = 6$, R2, R5, and R7 cannot be applied to $v$.
The only rule by which $v$ loses charge is R1.
Thus $\mu'(v) = 2 - 6 \times \frac{1}{3} = 0$.

\item [Case 4.2. $m_3(v) = 5.$]\mbox{}\\
By R1, $v$ sends $\frac{1}{3}$ to each of its incident 3-faces.
Since $m_3(v) = 5$, $v$ loses $5 \times \frac{1}{3} = \frac{5}{3}$ charge.
By Lemma~\ref{lem2}(2), $v$ is not adjacent to any 3-vertex.
The remaining face incident to $v$ is either one 4-face or one $5^+$-face.
First, we consider the case where the remaining face is a 4-face.
By Lemma~\ref{lem9}(1), we have $n_4(v) \leq 2$.
Thus we further divide the case based on the value of $n_4(v)$.
	\begin{description}
	\item[Case 4.2.1. $m_4(v) = 1$, $n_4(v) = 2.$]\mbox{}\\
	By Lemma~\ref{lem9}(2), we have $n_5(v) \leq 1$.
	In the worst situation, $v$ is adjacent to one 5-vertex $u$ with $m_3(u) \geq 4$.
	By R1, R5, and R7, $\mu'(v) \geq 2 - \frac{5}{3} - 2 \times \frac{1}{15} - 1 \times \frac{2}{15} = \frac{1}{15}$.
	
	\item[Case 4.2.2. $m_4(v) = 1$, $n_4(v) = 1.$]\mbox{}\\
	By Lemma~\ref{lem9}(3), we have $n_5(v) \leq 3$, and if $n_5(v) = 3$, then $m_3(u) \leq 3$ for any 5-vertex $u$ adjacent to $v$.
	Thus if $n_5(v) = 3$, then $v$ does not lose charge by R7.
	The vertex $v$ loses the most charge when $v$ is adjacent to two 5-vertices to which R7 applies.
	By R1, R5, and R7, $\mu'(v) \geq 2 - \frac{5}{3} - 1 \times \frac{1}{15} - 2 \times \frac{2}{15} = 0$.
	
	\item[Case 4.2.3. $m_4(v) = 1$, $n_4(v) = 0.$]\mbox{}\\
	By Lemma~\ref{lem9}(4) and Lemma~\ref{lem9}(4.1), we have $n_5(v) \leq 5$, and if $n_5(v) = 5$, then $m_3(u) \leq 3$ for any 5-vertex $u$ adjacent to $v$.
	Thus if $n_5(v) = 5$, then $v$ does not lose charge by R7.
	If $n_5(v) \leq 4$, then there exist at most two 5-vertices $u$ with $m_3(u) \geq 4$ by Lemma~\ref{lem9}(4.2) and Lemma~\ref{lem9}(4.3).
	This implies that $v$ loses at most $2 \times \frac{2}{15}$ charge by R7.
	By R1 and R7, $\mu'(v) \geq 2 - \frac{5}{3} - 2 \times \frac{2}{15} = \frac{1}{15}$.
	\end{description}
	
	Next, we consider the case where the remaining face is a $5^+$-face.
	By Lemma~\ref{lem10}(1), we have $n_4(v) \leq 2$.
	Thus we further divide the case based on the value of $n_4(v)$. 	
	\begin{description}
	\item[Case 4.2.4. $m_{5^+}(v) = 1$, $n_4(v) = 2.$]\mbox{}\\
	By Lemma~\ref{lem10}(2), we have $n_5(v) \leq 2$.
	This implies that $v$ loses at most $2 \times \frac{2}{15}$ charge by R7.
	By R1, R5, R7, and R8, $\mu'(v) \geq 2 - \frac{5}{3} - 2 \times \frac{1}{15} - 2 \times \frac{2}{15} + \frac{1}{5} = \frac{2}{15}$.
	
	\item[Case 4.2.5. $m_{5^+}(v) = 1$, $n_4(v) = 1.$]\mbox{}\\
	By Lemma~\ref{lem10}(3), we have $n_5(v) \leq 4$, and if $n_5(v) = 4$, then $m_3(u) \leq 3$ for any 5-vertex $u$ adjacent to $v$.
	Thus if $n_5(v) = 4$, then $v$ does not lose charge by R7.
	The vertex $v$ loses the most charge when $v$ is adjacent to three 5-vertices to which R7 applies.
	By R1, R5, R7, and R8, $\mu'(v) \geq 2 - \frac{5}{3} - 1 \times \frac{1}{15} - 3 \times \frac{2}{15} + \frac{1}{5} = \frac{1}{15}$.
	
	\item[Case 4.2.6. $m_{5^+}(v) = 1$, $n_4(v) = 0.$]\mbox{}\\
	By Lemma~\ref{lem10}(4), if $n_5(v) = 6$, then $m_3(u) \leq 3$ for any 5-vertex $u$ adjacent to $v$.
	Hence if $n_5(v) = 6$, then $v$ does not lose charge by R7.
	By Lemma~\ref{lem10}(5), if $n_5(v) = 5$, then there exist at most two 5-vertices $u$ with $m_3(u) \geq 4$.
	Thus $v$ loses the most charge when $v$ is adjacent to four 5-vertices to which R7 applies.
	By R1, R7, and R8, $\mu'(v) \geq 2 - \frac{5}{3} - 4 \times \frac{2}{15} + \frac{1}{5} = 0$.
	
	\end{description}
\item [Case 4.3. $m_3(v) = 4.$]\mbox{}\\
By R1, $v$ sends $\frac{1}{3}$ to each of its incident 3-faces.
Since $m_3(v) = 4$, $v$ loses $4 \times \frac{1}{3} = \frac{4}{3}$ charge.
We further divide the case based on the faces incident to $v$, which can be either two 4-faces, one 4-face and one $5^+$-face, or two $5^+$-faces.
	\begin{description}
	\item [Case 4.3.1. $m_4(v) = 2.$]\mbox{}\\
	By Lemma~\ref{lem11}(1), $v$ is not adjacent to any 3-vertex.
	The rule by which $v$ loses the most charge, except for R1, is R7.
	Thus if all neighbours of $v$ are 5-vertices to which R7 applies, then $v$ loses $6 \times \frac{2}{15} = \frac{4}{5}$ by R7.
	This discussion implies that $\mu'(v) = 2 - \frac{4}{3} - \frac{4}{5} = -\frac{2}{15} < 0$.
	However, by Lemma~\ref{lem11}(3), the situation does not arise.
	The next situation in which $v$ loses the most charge is when $v$ is adjacent to five 5-vertices $u$ with $m_3(u) \geq 4$ and one 4-vertex, but by Lemma~\ref{lem11}(2), this situation cannot occur.
	In the possible cases, $v$ loses the most charge when $v$ is adjacent to either four 5-vertices $u$ with $m_3(u) \geq 4$ and two 4-vertices, or five 5-vertices $u$ with $m_3(u) \geq 4$ and one 6-vertex.
	In the former case, $\mu'(v) = 2 - \frac{4}{3} - 2 \times \frac{1}{15} - 4 \times \frac{2}{15} = 0$ by R1, R5, and R7.
	In the latter case, $\mu'(v) = 2 - \frac{4}{3} - 5 \times \frac{2}{15} = 0$ by R1 and R7.
	
	\item [Case 4.3.2. $m_4(v) = 1, m_{5^+}(v) = 1.$]\mbox{}\\
	By Lemma~\ref{lem12}, $v$ is adjacent to at most one 3-vertex, and if $v$ is adjacent to one 3-vertex, then $v$ is not adjacent to any 4-vertex.
	Since $v$ is incident to one $5^+$-face, R8 or R9 can be applied to $v$.
	If $v$ is adjacent to one 3-vertex, then $v$ loses the most charge when $v$ is adjacent to five 5-vertices to which R7 applies.
	By R1, R2, R7, and R9, $\mu'(v) = 2 - \frac{4}{3} - 1 \times \frac{1}{9} - 5 \times \frac{2}{15} + 1 \times \frac{1}{9} = 0$.
	If $v$ is not adjacent to a 3-vertex, then $v$ loses the most charge when $v$ is adjacent to six 5-vertices to which R7 applies.
	By R1, R7, and R8, $\mu'(v) = 2 - \frac{4}{3} - 6 \times \frac{2}{15} + 1 \times \frac{1}{5} = \frac{1}{15}$.
	
	\item [Case 4.3.3. $m_{5^+}(v) = 2.$]\mbox{}\\
	By Lemma~\ref{lem13}, $v$ is adjacent to at most one 3-vertex, and if $v$ is adjacent to one 3-vertex, then $v$ is not adjacent to any 4-vertex.
	Since $v$ is incident to two $5^+$-faces, $v$ receives at least $2 \times \frac{1}{9}$ charge by R9.
	If $v$ is adjacent to one 3-vertex, then $v$ loses the most charge when $v$ is adjacent to five 5-vertices to which R7 applies.
	By R1, R2, R7, and R9, $\mu'(v) = 2 - \frac{4}{3} - 1 \times \frac{1}{9} - 5 \times \frac{2}{15} + 2 \times \frac{1}{9} = \frac{1}{9}$.
	If $v$ is not adjacent to a 3-vertex, then $v$ loses the most charge when $v$ is adjacent to six 5-vertices to which R7 applies.
	By R1, R7, and R8, $\mu'(v) = 2 - \frac{4}{3} - 6 \times \frac{2}{15} + 2 \times \frac{1}{5} = \frac{4}{15}$.
	\end{description}
	
\item [Case 4.4. $m_3(v) \leq 3.$]\mbox{}\\
By R1, $v$ loses at most $3 \times \frac{1}{3} = 1$ charge.
The rule by which $v$ loses the most charge, except for R1, is R7.
Thus if all neighbours of $v$ are 5-vertices $u$ with $m_3(u) \geq 4$, then $v$ loses $6 \times \frac{2}{15} = \frac{4}{5}$ charge by R7.
The final charge is $\mu'(v) \geq 2 - 1 - \frac{4}{5} = \frac{1}{5} > 0$, which implies that $\mu'(v) \geq 0$ holds when $m_3(v) \leq 3$.
\end{description}
Now, we have confirmed $\mu'(x) \geq 0$ for all $x \in V(G) \cup F(G)$, which is a contradiction.
Therefore, Theorem~\ref{main} holds.



\bibliographystyle{abbrv} 
\bibliography{ref}

\end{document}